\newtheorem{theorem}{Theorem}[section]
\newtheorem{lemma}[theorem]{Lemma}
\newtheorem{proposition}[theorem]{Proposition}
\newtheorem{corollary}[theorem]{Corollary}
\theoremstyle{definition}
\newtheorem{definition}[theorem]{Definition}
\theoremstyle{remark}
\newtheorem{remark}[theorem]{Remark}
\numberwithin{equation}{section}
\begin{document}
\setcounter{page}{1}

\title[$N$-strongly quasi-invariant measure on  double coset]{$N$-strongly quasi-invariant measure on  double coset spaces}

\author[  F. Fahimian and R.A. Kamyabi-Gol, F. Esmaeelzadeh]{ F. Fahimian $^1$  and R.A. Kamyabi-Gol$^{2{*}}$ F. Esmaeelzadeh $^3$}

\address{$^{3}$ Department of  Mathematics, Bojnourd Branch, Islamic Azad university, Bojnourd, Iran.}
\email{\textcolor[rgb]{0.00,0.00,0.84}{esmaeelzadeh@bojnourdiau.ac.ir}}

\address{$^{1,2}$ Department of Mathematics, Center of Excellecy in Analysis on Algebric Structures (CEAAS), Ferdowsi University of Mashhad, Mashhad, Iran.}
\email{\textcolor[rgb]{0.00,0.00,0.84}{fatemefahimian@gmail.com}}
\email{\textcolor[rgb]{0.00,0.00,0.84}{kamyabi@um.ac.ir}}


\subjclass[2010]{Primary 43A85; Secondary 43A62}

\keywords{Double coset space,  $N$-Strongly quasi invariant measure, rho-function.}

\date{Received: xxxxxx; Revised: yyyyyy; Accepted: zzzzzz.
\newline \indent $^{*}$ Corresponding author}

\begin{abstract}
Let $G$ be a locally compact group, $H$ and $K$ be two closed subgroups of $G$, and $N$ be the normalizer group of $K$ in $G$. In this paper, the existence and properties of a rho-function for the triple $(K, G, H)$ and an $N$-strongly quasi-invariant measure of double coset space $K \backslash G /H$ is investigated. In particular, it is shown that any such measure arises from a rho-function. Furthermore, the conditions under which an $N$-strongly quasi-invariant measure arises from a rho-function are studied. 
\end{abstract} \maketitle

\section{Introduction}
Let $G$ be a locally compact group and $H$ and $K$ be closed subgroups of $G$. The double coset space of $G$ by $H$ and $K$ respectively, is 
\[
K \backslash G /H  =\{Kx H ;\ x \in G\}, 
\]
which induced by Liu in \cite{5}.\\
When $K$ is trivial, a double coset $K \backslash G /H$ changes to a homogeneous space $G/H$. The existence of quasi-invariant measures on homogeneous spaces $G/H$ (with merely measurable  rho-functions) was first proved by Mackey \cite{khkh6} under the assumption that $G$ is second countable. Bruhat \cite{khkh8} and Loomis \cite{khkh7} showed  how to obtain strongly quasi-invariant measures with no countability hypotheses. This work is extended in a special case in \cite{khkh90}. Also, the existence of a homomorphism rho-function causes  the existence of a relatively invariant measure on $G/H$ is in \cite{khkh999}. One may refer to \cite{khkh999,3} to find more informations about homogeneous space $G/H$. \\
When $K=H$, a double coset space $K \backslash G /H$ changes to a hypergroup in which the homogeneous space $G/H$ is a semi hypergroup \cite{kh8}. It is worthwhile to note that the hypergroup plays important rules in physics.\\
In this paper, we construct an $N$-strongly quasi-invariant measure on  $K \backslash G /H$ when $H$ and $K$ are used subgroups, not necessarily compact. Also we investigate when $K$ is a normal closed subgroup of $G$ then $K \backslash G /H$ possesses a $G$-strongly quasi-invariant measure. In addition, when $H$ is trivial we show  the existence of an $N$-strongly quasi-invariant measure on the right cosets of $K$ in $G$. \\
It is worth mentioning that in \cite{5} the conditions for the existence of $N$-relatively invariant measures and $N$-invariant measures are investigated. \\
Some preliminaries and notations about coset space $K \backslash G /H$ and related measures on it are stated in Section. \ref{sekh2}. \\
In Section. \ref{sekh3}, we construct a rho-function for the triple $(K, G, H)$ and introduce an $N$-strongly quasi-invariant measure which arises from this rho-function. \\
In particular, we obtain in Section. \ref{sekh4}., conditions under which an $N$-strongly quasi-invariant measure arises from a rho-function.
\section{Notations and preliminary results} \label{sekh2} 
Let $G$ be a locally compact Hausdorff group  and let $H$ and $K$ be closed subgroups of $G$. Throughout this paper, we denote the left Haar measures on $G$, $H$ and $K$ respectively, by $dx$, $dh$, $dk$, and their modular functions by $\Delta_G$, $\Delta_H$ and $\Delta_K$, respectively.  If $S$ is a locally compact Hausdorff space, a (left) action of $G$ on $S$ is a continuous map $(x,s)\mapsto xs$ from $G\times S$ to $S$ such that (i) 
$s \to xs$ is a homeomorphism of $S$ for each $x \in G$, and (ii)  
$x(ys)=(xy)s$ for all $x,y \in G$ and $s\in S$. 
A space $S$ equipped with an action of $G$ is called  a $G$-space. A $G$-space $S$ is called transitive if for every $s,t\in S$ there exists $x \in G$ such that $xs =t$. \\
The standard examples of transitive $G$-spaces are the quotient spaces $G/H$ (where $H$ is a closed subgroup of $G$), equipped with the quotient topology  on which $G$ acts by left multiplication. We shall use the term homogeneous space to mean a transitive space $S$ that is isomorphic to a quotient space $G/H$. 
 In   homogeneous space  $G/H$, if  $\mu$ is a positive Radon measure on $G/H$, Borel set $E$ is called negligible with respect to $\mu$, if $\mu(E)=0$. Let $\mu_x$ denote its transfer by $x \in G$, that is $\mu_x(E)=\mu(x\cdot E)$ for any Borel set $E \subseteq G/ H$. $\mu$  is called strongly quasi invariant if there is a positive continuous function $\lambda$ on $G \times G/H$ such that $d\mu _x (yH)=\lambda (x, yH)d\mu (yH)$, for all $x, y \in G$.  A rho-function for the pair $(G, H)$ is defined to be a positive locally integrable function $\rho$ on $G$ which satisfies
\[
\rho
(xh)=\frac{\Delta _H(h)}{\Delta_G(h)} \rho(x), \quad (x\in G, \ h \in H).
\]
It is  known that for each pair $(G, H)$ there is a strictly positive rho-function which constructs a strongly quasi-invariant measure $\mu$ on $G/H$ such that
\begin{equation}
\int _G f(x) \rho(x) dx =\int _{\frac{G}{H}} \int _H f(xh) dh d\mu (xH), \label{eqkh1.1}
\end{equation}
for all $f \in C_c(G)$, the space of all continuous functions on $G$ with compact supports. \\
And conversely, each strongly quasi-invariant measure on $G/H$ arises from a rho-function which satisfies \eqref{eqkh1.1} for a rho-function $\rho$, and all such measures are strongly equivalent. That is to say, all strongly quasi-invariant measures on $G/H$ have the same negligible sets (see \cite{3,khkh999}).\\
The notion of double coset space is a natural generalization of that of coset space arising by two subgroups, simultaneously. Recall that if $K \backslash G /H$ is a double coset space of $G$ by $H$ and $K$, then elements of $K \backslash G /H$ are given by $\{ Kx H ;\ x \in G\}$.\\
The canonical mapping of which, is $q: G \to K \backslash G /H$, defined by $q(x)=KxH$, which is abbreviated by $\ddot{x}$, and which is surjective. The double coset space $K \backslash G /H$ equipped with the quotient topology, which is the largest topology,  that makes $q$ continuous. In this topology $q$ is also an open mapping and {\bf proper}--that is for each compact set  $F \subseteq K \backslash G /H$ there is a compact set  $E \subseteq G$ with $q(E)=F$. Based on the above mentioned case,   $K \backslash G /H$ is a locally compact and Hausdorff space. \\
Let $N$ be the normalizer of $K$ in $G$, i.e., 
\[
N=\{g\in G; \ gK=Kg \}.
\]
Then, there is a naturally defined mapping
\[
\varphi:N \times K \backslash G /H \to K \backslash G /H
\]
given by
\[
\varphi\big(n, q(x)\big):=Knx H.
\]
It can be verified that $\varphi$ is a well-defined, continuous, transitive action of $N$ on $K \backslash G /H$. Considering $K \backslash G /H$ with this transitive action, we now  denote $\varphi\big(n, q(x)\big)$ by $n\cdot q(x)$.\\
We define the mapping $Q$ from  $C_c(G)$ to $C_c(K \backslash G /H)$ by 
\[
Q(f)(KxH)=\int_K \int_H f(k^{-1}xh)dh dk.
\]
It is evident that $Q$ is a well-defined continuous linear map, as well as $supp\big(Q(f)\big)\subseteq q(supp \ f)$. 
In the following,  the properties of this mapping is investigated. However,  we first  recall that the definition of $IN$-group  and verification of a property of it  is used in the sequel.\\
A locally compact group $G$ is called an $IN$-group if there is a compact unit neighbourhood $U$ in  $G$ which is invariant under inner automorphism, that is, for any $x \in G$, $x Ux^{-1}=U$. It is known that the $IN$-groups are unimodular.
\begin{lemma}\label{le2.2}
If $K$ is also  an $I N$-group, then $\int_Kf(k)dk=\int_K f(n k n^{-1})dk$, for all $f\in C_c(K)$ and $n \in N$. 
\end{lemma}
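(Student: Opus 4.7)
The plan is to reduce the statement to uniqueness of left Haar measure on $K$. Since $n \in N$ normalizes $K$, the conjugation map $\psi_n : K \to K$, $\psi_n(k) = nkn^{-1}$, is a topological automorphism of $K$. To check that $f \mapsto \int_K f(\psi_n(k))\,dk$ is a left Haar integral on $C_c(K)$, I would fix $k_0 \in K$, write $\psi_n(k_0 k) = (nk_0 n^{-1})\,\psi_n(k)$, and note that $nk_0 n^{-1} \in K$; left invariance of $dk$ then yields $\int f(\psi_n(k_0 k))\,dk = \int f(\psi_n(k))\,dk$. By uniqueness of Haar measure up to a positive constant, there exists $c(n) > 0$ with
\[
\int_K f(nkn^{-1})\,dk = c(n) \int_K f(k)\,dk \qquad (f \in C_c(K)),
\]
and the task reduces to showing $c(n) = 1$ for all $n \in N$.

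Next I would observe that $n \mapsto c(n)$ is a continuous homomorphism $N \to (0,\infty)$. Since every $IN$-group is unimodular, $K$ is unimodular, so for $n \in K$ inner conjugation preserves left Haar measure and $c|_K \equiv 1$. Thus $c$ descends to a continuous homomorphism $N/K \to (0,\infty)$, and it suffices to kill $c$ on representatives of the remaining cosets.

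The main obstacle is this last step. I would let $U \subseteq K$ be the compact $K$-conjugation-invariant neighborhood of the identity provided by the $IN$-hypothesis, i.e.\ $k U k^{-1} = U$ for all $k \in K$, and apply the identity above to $f = \chi_U$, obtaining $\mu_K(n^{-1}Un) = c(n)\,\mu_K(U)$. A short computation shows $n^{-1}Un$ is itself a compact $K$-conjugation-invariant neighborhood of $e$: for $k_0 \in K$ one has $k_0 (n^{-1}Un) k_0^{-1} = n^{-1}(nk_0 n^{-1}) U (nk_0 n^{-1})^{-1} n = n^{-1}Un$ because $nk_0 n^{-1} \in K$ and $U$ is $K$-invariant. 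The remaining content is to match the Haar measures of the two $K$-invariant neighborhoods $U$ and $n^{-1}Un$; I would attempt this by producing a $K$-conjugation-invariant continuous bump $\phi \in C_c(K)$ via averaging and comparing $\int \phi(nkn^{-1})\,dk$ with $\int \phi(k)\,dk$ directly. This is the step that genuinely uses the $IN$-structure of $K$, rather than mere unimodularity, and is where I expect the bulk of the technical work to lie.
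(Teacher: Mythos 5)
Your reduction is correct and is the same first half as the paper's argument: the functional $f \mapsto \int_K f(nkn^{-1})\,dk$ is a left Haar integral on $K$ (your computation $\psi_n(k_0k)=(nk_0n^{-1})\,\psi_n(k)$ with $nk_0n^{-1}\in K$ is exactly right), so uniqueness of Haar measure gives $\int_K f(nkn^{-1})\,dk = c(n)\int_K f(k)\,dk$, and everything hinges on showing $c(n)=1$. But that is precisely the step you do not carry out: you note that $U$ and $n^{-1}Un$ are both compact $K$-conjugation-invariant unit neighbourhoods and then say you ``would attempt'' to match their Haar measures by averaging a bump function. This is a genuine gap, not a deferred routine verification, and it cannot be closed from the stated hypotheses. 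The $K$-invariance of the two sets gives no leverage: when $K$ is abelian \emph{every} compact neighbourhood of $e$ is invariant under inner automorphisms of $K$, so the $IN$-hypothesis is vacuous there, yet the conclusion can fail. Concretely, let $G$ be the $ax+b$ group $\mathbb{R}\rtimes\mathbb{R}^{+}$ and $K=\mathbb{R}$ the normal translation subgroup, so $N=G$ and $K$, being abelian, is an $IN$-group; conjugation by the dilation $a$ sends $k$ to $ak$, whence $\int_K f(nkn^{-1})\,dk=a^{-1}\int_K f(k)\,dk$ and $c(n)=a^{-1}\neq 1$. No averaging construction can get around this.

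For what it is worth, the paper's own proof founders on the same rock: it asserts $|n^{-1}Un|\leq|U|$ for $n\in N$ with no justification (the $IN$-property only controls conjugation by elements of $K$, not by elements of the normalizer $N$), and then runs an inequality chain that silently uses this. Your observation that $c:N\to(0,+\infty)$ is a homomorphism with $c|_K\equiv 1$ (by unimodularity of the $IN$-group $K$) is the most that the hypotheses actually yield; to force $c\equiv 1$ on all of $N$ one needs something extra, e.g.\ that the compact neighbourhood $U$ can be chosen invariant under conjugation by all of $N$, or that $N/K$ admits no nontrivial continuous homomorphism into $(0,+\infty)$ (for instance $N/K$ compact). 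Your instinct that the bulk of the technical work lies in this last step is accurate; the difficulty is that, as the lemma is stated, that work cannot be done.
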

\begin{proof}
Let for  $n \in N$,  $\lambda_n : C_c(K) \to \mathbb{C}$ be given by 
\[
\lambda _n (f) =\int_K f(n kn^{-1}) dk.
\]
Then for every  $t\in K$, we have
\[
\lambda _n (L_{t^{-1}}f) =\int_K L_{t^{-1}} f(n kn^{-1})dk =\int_K f( tnkn^{-1})dk=\int_K f(nkn^{-1})dk. 
\]
This shows that  $\lambda _n$ is  left invariant, so it induced a left Haar measure $\lambda_n$ on $K$.
Therefore, there is $c>0$ such that 
\[
\int_K f(k)d\lambda _n (k) =c \int_K f(k) dk.
\]
Since $K$ is an $IN$-group, then there is a compact unit neighbourhood $U$ in $K$ such that $x Ux^{-1}=U$ for all $x \in K$. Thus,  $|n^{-1} Un|\leq|U|$ for each $n \in N$, where $|U|$ denotes the measure of $U$.  Therefore, we can write
\begin{align*}
|c-1| |U|&= \big||c|U|-|U|\big|\\
&=\big|\lambda _n(U) -|U|\big|=\big||nUn^{-1}|-|U|\big|\\
&\leq \big||U| -|U|\big|=0.
\end{align*}
This implies that 
 $c=1$. 
\end{proof}
\begin{lemma}\label{lekh2.3}
For any compact set  $F \subseteq K \backslash G / H$  there exists $f\in C_c^+(G)$ such that $Qf=1$ on $F$. 
\end{lemma}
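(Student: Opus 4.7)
The plan is to reduce the problem to a standard partition–of–unity style construction, using the properness of $q$ and the ``fiber invariance'' of the map $q$ under $K$ on the left and $H$ on the right.

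First, since $q:G\to K\backslash G/H$ is proper, I would pick a compact set $E\subseteq G$ with $q(E)=F$. Urysohn's lemma in the locally compact Hausdorff group $G$ then produces $g\in C_c^+(G)$ with $g\geq 1$ on $E$. The first nontrivial step is to show that $Qg>0$ throughout $F$. Given $\ddot x\in F$, choose $x_0\in E$ with $q(x_0)=\ddot x$; by continuity of $g$ there is a neighborhood of $x_0$ on which $g>0$, and hence a neighborhood $U\times V$ of $(e,e)$ in $K\times H$ for which $g(k^{-1}x_0h)>0$. Integrating over $U\times V$ shows $Qg(\ddot x)>0$. Since $F$ is compact and $Qg$ is continuous, there exists $c>0$ with $Qg\geq c$ on $F$.

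Next, using that $K\backslash G/H$ is locally compact Hausdorff, I would set $V=\{Qg>c/2\}$, which is an open neighborhood of $F$, and apply Urysohn on $K\backslash G/H$ to obtain $\chi\in C_c(K\backslash G/H)$ with $0\leq \chi\leq 1$, $\chi\equiv 1$ on $F$ and $\operatorname{supp}\chi\subseteq V$. Define
\[
\phi(\ddot y)=\begin{cases}\chi(\ddot y)/Qg(\ddot y),&\ddot y\in V,\\ 0,&\ddot y\notin V.\end{cases}
\]
Since $Qg>c/2$ on $V$, the quotient is continuous on $V$, and because $\chi$ vanishes outside a compact subset of $V$, $\phi$ extends continuously by zero. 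Thus $\phi\in C_c^+(K\backslash G/H)$ and $\phi=1/Qg$ on $F$.

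Finally, set $f(x)=g(x)\,\phi(q(x))$. Then $f\in C_c^+(G)$, as $g$ has compact support and $\phi\circ q$ is continuous and nonnegative. The crucial observation is that for $k\in K$ and $h\in H$ one has $q(k^{-1}xh)=Kk^{-1}xhH=KxH=q(x)$, so $\phi(q(k^{-1}xh))=\phi(\ddot x)$ and can be pulled outside both integrals. Therefore
\[
Qf(\ddot x)=\phi(\ddot x)\int_K\int_H g(k^{-1}xh)\,dh\,dk=\phi(\ddot x)\,Qg(\ddot x),
\]
which equals $1$ for every $\ddot x\in F$, completing the proof. The only delicate point is verifying the positivity of $Qg$ on $F$ (Step 1–2); once that is in hand the rest is a routine cutoff/division argument.
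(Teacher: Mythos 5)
Your proof is correct, and it fills a gap rather than competes with anything: the paper disposes of this lemma with ``the proof is straightforward'' and gives no argument. Your construction --- take $g\ge 1$ on a compact $E$ with $q(E)=F$ (using properness of $q$), check $Qg>0$ on $F$ by integrating over a small product neighbourhood of $(e,e)$ in $K\times H$, then cut off with Urysohn on the quotient and divide by $Qg$ --- is the standard one, and it is essentially the same device the authors do write out in the proof of Lemma \ref{le2.3}(i), where they form $f_1=f\cdot(F\circ q)/(Qf\circ q)$; your $\{Qg>c/2\}$ trick handles continuity of the quotient at the boundary of the support more cleanly than their version does. The only place your argument rests on an unproved assertion is the use of $Qg$ as a finite-valued continuous function on $K\backslash G/H$: the paper simply declares $Q$ well defined, but for noncompact $K$ and $H$ the set $\{(k,h)\in K\times H:\ k^{-1}xh\in\operatorname{supp}g\}$ need not be compact, so finiteness of the defining integral is a genuine issue with the paper's setup rather than with your proof; given the paper's standing claim that $Q(C_c(G))\subseteq C_c(K\backslash G/H)$, your argument is complete.
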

\begin{proof}
The proof is straightforward. 
\end{proof}
Note that for $f\in C_c(G)$ and $g\in G$, we consider $L_g f(x)=f(g^{-1}x)$ and $R_gf(x)=f(xg)$, and for each $n \in N$ and $F\in C_c(K \backslash G /H)$, we define $L_gF(\ddot{x})=F\big((g^{-1}x)^{\ddot{\ }}\big)$ and $R_gf(\ddot{x})=F\big((xg)^{\ddot{\ }}\big)$.
\begin{lemma}\label{le2.3}
Given the notation at the beginning of the section,
the map $Q: C_c(G)\to C_c(K \backslash G /H)$  has the following properties .
\begin{enumerate}
\item[(i)]
$Q\big(C_c(G)\big)=C_c( K \backslash G/H)$
\item[(ii)]
If $K$ is also an $IN$-group, then for each $n \in N$ 
\[
Q(L_n f)= L_n Q(f) , \quad f\in C_c(G).
\]
\end{enumerate}
\end{lemma}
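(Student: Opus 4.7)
The plan is to treat the two parts separately. For (i), the inclusion $Q(C_c(G)) \subseteq C_c(K \backslash G / H)$ has already been recorded just before the lemma, so only the reverse inclusion requires proof. My strategy is to use Lemma \ref{lekh2.3} as a partition-of-unity device: given $F \in C_c(K \backslash G / H)$ with support $C$, I will pick $\phi \in C_c^+(G)$ with $Q\phi \equiv 1$ on $C$, and set $f(x) := \phi(x) F(q(x))$. This $f$ lies in $C_c(G)$, being continuous and supported inside the compact set $\mathrm{supp}(\phi)$. Since $q(k^{-1} x h) = q(x)$ for all $k \in K$, $h \in H$, the factor $F(q(x))$ pulls out of the double integral defining $Qf$, giving $Qf(KxH) = F(KxH) \, Q\phi(KxH)$. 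This coincides with $F(KxH)$ both on $C$ (where $Q\phi = 1$) and off $C$ (where $F$ vanishes), so $Qf = F$ and surjectivity follows.

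For (ii), I would compute directly. Unfolding the definitions gives $Q(L_n f)(KxH) = \int_K \int_H f(n^{-1} k^{-1} x h) \, dh \, dk$, and I would then invoke the key algebraic identity $n^{-1} k^{-1} x h = (n^{-1} k n)^{-1} (n^{-1} x h)$, which holds precisely because $n \in N$ normalizes $K$. Setting $g(k) := \int_H f(k^{-1} n^{-1} x h) \, dh$, this converts the outer integral into $\int_K g(n^{-1} k n) \, dk$. Applying Lemma \ref{le2.2} with $n^{-1} \in N$ in place of $n$ rewrites this as $\int_K g(k) \, dk$, which by the definition of $Q$ is exactly $Q(f)(K n^{-1} x H) = L_n Q(f)(KxH)$.

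The main conceptual point, rather than an obstacle, is recognizing that (ii) depends crucially on the IN-hypothesis through Lemma \ref{le2.2}: without it, the change of variables $k \mapsto n^{-1} k n$ on $K$ would in general contribute a positive modular factor, and $Q$ would fail to intertwine $L_n$ on the nose. The remaining steps are routine compact-support bookkeeping (checking that $\phi \cdot (F \circ q)$ really lies in $C_c(G)$, and that the auxiliary function $g$ lies in $C_c(K)$ so that Lemma \ref{le2.2} is applicable), and I do not anticipate any difficulty with them.
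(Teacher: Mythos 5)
Your proof is correct and takes essentially the same route as the paper's: part (ii) is the identical conjugation argument $k\mapsto n^{-1}kn$ justified by Lemma \ref{le2.2}, and for part (i) the paper likewise builds the preimage as a bump function times $F\circ q$, except that it normalizes by dividing by $Q(f)\circ q$ directly (with a case split where that vanishes) instead of invoking Lemma \ref{lekh2.3} as you do. The two normalizations amount to the same thing, so there is nothing further to add.
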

\begin{proof}
For (i) suppose that  $F\in C_c(K \backslash G /H) $. Since $q$ is proper, then  there is a compact subset $D\subseteq  G $ such that $q(D)=supp \ F$. Let $f\in C_c(G)$ be such that $f(d)>0$ for all $d\in D$. Consider the   function $f_1$  defined  on $G$ by 
\[
f_1(x)=\begin{cases}
\frac{f(x)\cdot F(q(x))}{Q(f)(q(x))} & \text{if} \quad Q(f)\big(q(x)\big) \neq 0 \\
0 & \text{if} \quad Q(f) \big(q(x)\big)=0 
\end{cases}.
\]
Since $Q(f)\big(q(x)\big)>0$ for $x \in q^{-1} \big((supp\ F)\big)$, and $F\big(q(x)\big)=0$, for $x \in G \setminus q^{-1}(supp \ F)$, which is an open subset in $G$,  $f_1 \in C_c(G)$ and $Q(f_1)=F$. \\
Finally, for  (ii) according to Lemma \ref{le2.2}, we may state that 
\begin{align*}
Q(L_n f)(KxH)&= \int_K \int_H f(nk^{-1}xh) dh dk \\
&=\int_H\int_K f(nk^{-1}n^{-1}nxh)dkdh\\
&=\int_H\int_K f(k^{-1}nxh)dkdh\\
&=L_n Q(f) (KxH) .
\end{align*}
\end{proof}
Next theorem gives a necessary and sufficient condition for the existence of a positive Radon measure on $K \backslash G /H$. 
\begin{theorem}\label{th2.4}
If $\mu$ is a positive Radon measure on $K \backslash G /H$, then positive Radon measure $\tilde{\mu}$ on $G$ is defined by 
\begin{equation}\label{eq2.1}
\int_G f(x) d \tilde{\mu}(x)=\int_{K \backslash G /H} Q(f) (\ddot{x})d\mu (\ddot{x}),
\end{equation}
satisfying
\begin{equation}\label{eq2.2}
\int_G f(k x h^{-1}) d\tilde{\mu}(x)=\Delta _K (k) \Delta_H(h) \int_G f(x) d\tilde{\mu}(x). 
\end{equation}
Conversely, if a positive Radon measure $\tilde{\mu}$ on $G$ has the property \eqref{eq2.2}, then the equation \eqref{eq2.1} defines a positive Radon measure $\mu$ on $K \backslash G /H$. 
\end{theorem}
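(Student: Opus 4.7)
For the forward implication, the functional $f \mapsto \int_{K\backslash G/H} Q(f)(\ddot x)\,d\mu(\ddot x)$ is positive and linear on $C_c(G)$, so the Riesz representation theorem produces a positive Radon measure $\tilde\mu$ satisfying \eqref{eq2.1}. The transformation law \eqref{eq2.2} then reduces to the identity
\[
Q(g) \;=\; \Delta_K(k)\,\Delta_H(h)\,Q(f),\qquad g(x):=f(kxh^{-1}),
\]
which I would establish by unwinding the definition
$Q(g)(KyH)=\int_K\!\int_H f(kk'^{-1}yh'h^{-1})\,dh'\,dk'$
and twice applying the Haar right-translation rule: the identity $\int_H F(h'h^{-1})\,dh' = \Delta_H(h)\int_H F(h')\,dh'$ pulls out the factor $\Delta_H(h)$, and the analogous identity $\int_K F(kk'^{-1})\,dk' = \Delta_K(k)\int_K F(k'^{-1})\,dk'$ (obtained by the substitution $k'\mapsto wk$) pulls out $\Delta_K(k)$. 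Integrating this identity against $\mu$ gives \eqref{eq2.2}.

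For the converse, Lemma \ref{le2.3}(i) tells us every $F\in C_c(K\backslash G/H)$ is of the form $Q(f)$ for some $f\in C_c(G)$, so I would try to define a positive linear functional $\Lambda$ on $C_c(K\backslash G/H)$ by $\Lambda(Q(f)):=\int_G f\,d\tilde\mu$. The only real work is well-definedness: the assertion that $Q(f)=0$ forces $\int_G f\,d\tilde\mu = 0$. The key device is a cutoff: by Lemma \ref{lekh2.3}, choose $g\in C_c^+(G)$ with $Q(g)\equiv 1$ on $q(\operatorname{supp} f)$, so that $f(x) = f(x)\,Q(g)(\ddot x)$ pointwise on $G$; expanding $Q(g)$ and applying Fubini yields
\[
\int_G f\,d\tilde\mu \;=\; \int_K\!\int_H\!\int_G f(x)\,g(k^{-1}xh)\,d\tilde\mu(x)\,dh\,dk.
\]

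On the inner integral I would apply \eqref{eq2.2} in the form $\int F(x)\,d\tilde\mu(x) = \Delta_K(k)^{-1}\Delta_H(h)^{-1}\int F(kyh^{-1})\,d\tilde\mu(y)$ to the function $F(x) = f(x) g(k^{-1}xh)$; the happy cancellation $g(k^{-1}(kyh^{-1})h) = g(y)$ produces
\[
\int_G f(x)\,g(k^{-1}xh)\,d\tilde\mu(x) \;=\; \Delta_K(k)^{-1}\Delta_H(h)^{-1}\!\int_G f(kyh^{-1})\,g(y)\,d\tilde\mu(y).
\]
Substituting $k\mapsto k^{-1}$ and $h\mapsto h^{-1}$ in the outer integrals, using the inversion formula $\int_K \Phi(k)\Delta_K(k)^{-1}\,dk = \int_K \Phi(k^{-1})\,dk$ and its $H$-analogue, cancels every modular factor; a final Fubini delivers
\[
\int_G f\,d\tilde\mu \;=\; \int_G g(y)\,Q(f)(\ddot y)\,d\tilde\mu(y),
\]
which vanishes whenever $Q(f)=0$. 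Positivity of $\Lambda$ is immediate, since the explicit construction in the proof of Lemma \ref{le2.3}(i) produces a nonnegative representative $f\ge 0$ for any $F\ge 0$, whence $\Lambda(F)=\int f\,d\tilde\mu\ge 0$; Riesz then produces the desired measure $\mu$.

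The main obstacle is the bookkeeping in the converse: four nested integrations, three successive changes of variable, and a handful of modular functions must be coordinated so that every $\Delta_K$ and $\Delta_H$ factor cancels at the end. Each substitution must go in the correct direction or the argument collapses, and this is the only genuinely delicate point of the proof.
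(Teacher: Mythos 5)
Your proposal is correct and follows essentially the same route as the paper: the forward direction by pulling the modular factors out of $Q$ applied to the translated function, and the converse by defining $\Lambda(Q(f))=\int_G f\,d\tilde\mu$ and proving well-definedness via the cutoff $g$ from Lemma \ref{lekh2.3}, Fubini, the invariance property \eqref{eq2.2}, and the inversion formula on $K$ and $H$. Your bookkeeping of the factors $\Delta_K(k)^{-1}\Delta_H(h)^{-1}$ in the intermediate step is in fact the correct version of what the paper writes (its displayed line shows the un-inverted factors, apparently a typo), and your remark on positivity of $\Lambda$ fills in a detail the paper leaves as ``easy to check.''
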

\begin{proof}
Suppose that $\mu$ is a positive Radon measure on $K \backslash G /H$, then  $\tilde{\mu}$ defined by \eqref{eq2.1} is clearly a positive Radon measure on $G$. Also, for each $h_0 \in H$, $k_0\in K$, $f \in C_c(G)$, we have 
\begin{align*}
\int_G f(k_0 x h_0^{-1})d\tilde{\mu}(x)& =\int_{K \backslash G /H}\int_K\int_H L_{k_0^{-1}}\circ R_{h_0^{-1}} f(k^{-1}xh)dhdk d\tilde{\mu}(x)\cr
&=\int_{K \backslash G /H}\int_K\int_H f(k_0k^{-1}xhh_0^{-1})dhdk d\mu(\ddot{x})\\
&= \Delta_H (h_0)\Delta_K(k_0)\int_{K \backslash G /H} Q(f)(\ddot{x}) d\mu(\ddot{x}) \\
&=\Delta_H(h_0)\Delta_K(k_0) \int_G f(x) d\tilde{\mu}(x). 
\end{align*}
Conversely, suppose that the positive Radon measure $\tilde{\mu}$ on $G$ has the property \ref{eq2.2}, then  take
\[
\mu: C_c(K \backslash G /H)\to (0, +\infty),
\]
 by
\[
\mu\big(Q(f)\big)=\int_G f(x)d\tilde{\mu}.
\]
Now we show that $\mu$ is well-defined, let $f\in C_c(G)$ such that $Q(f)=0$. According to Lemma \ref{lekh2.3}, there is $g $ in $C_c(G)$ such that $Q(g)\equiv 1$ on $Q(supp \ f)$. By  using the Fubini's Theorem, we have 
\begin{align*}
\int_G f(x)d\tilde{\mu}&=\int_G f(x)Q\big(g(q(x)\big) d\tilde{\mu}(x)\\
&=\int_G f(x) \int_K \int_H g (k^{-1}xh) dh dk d\tilde{\mu}(x) \\
&=\int_K\int_H\int_G f(kxh^{-1})g(x) \Delta_K(k)\Delta_H(h)d\tilde{\mu}(x)dhdk \\
&=\int_G g(x) \int_H \int_K f(k^{-1}xh)  dkdhd\tilde{\mu}(x)\\
&=\int_G g(x) Q(f) \big(q(x)\big) d\tilde{\mu}(x)=0.
\end{align*}
It is easy to check that $\mu$ is a positive linear functional, therefore it induces a positive Radon measure $\mu$ on $K \backslash G /H$ such that,
\[
\int_G f(x) d \tilde{\mu}(x)=\int_{K \backslash G /H} Q(f) (\ddot{x})d\mu (\ddot{x}).
\]
\end{proof}
\begin{corollary}
Considering the assumptions of  Theorem \ref{th2.4},  there is a correspondence between the positive Radon measure $\tilde{\mu}$ on $G$ and $\mu$ on the double coset space $G//H$, such that
\[
\int_G f(x)d\tilde{\mu}(x)=\int_{G //H} Q(f) (\ddot{x}) d\mu(\ddot{x})
\]
and
\[
\int_G f(hxh^{-1})d\tilde{\mu}(x)=\int_G f(x)d\tilde{\mu}(x)
\]
for all $f \in C_c(G)$.
\end{corollary}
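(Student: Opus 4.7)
The plan is to specialize Theorem~\ref{th2.4} to the symmetric case $K = H$. Since the double coset space $G//H$ is by definition $H\backslash G/H$, the first identity in the corollary is simply the assertion of formula \eqref{eq2.1} with $K$ replaced by $H$; hence the correspondence $\tilde{\mu}\leftrightarrow\mu$ and the integral formula relating them are immediate consequences of the theorem and require no additional argument.

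For the conjugation-invariance identity, I would take property \eqref{eq2.2} of Theorem~\ref{th2.4} with $K=H$ and substitute $k=h$ into the left-hand side. This yields
\begin{equation*}
\int_G f(hxh^{-1})\,d\tilde{\mu}(x) \;=\; \Delta_H(h)\,\Delta_H(h)\int_G f(x)\,d\tilde{\mu}(x) \;=\; \Delta_H(h)^{2}\int_G f(x)\,d\tilde{\mu}(x).
\end{equation*}
So up to the modular factor the statement is a routine specialization; to obtain the stated identity I would invoke unimodularity of $H$ (so that $\Delta_H\equiv 1$), which is the natural assumption in the hypergroup context $K=H$ and which must be implicit in the corollary.

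The main obstacle is therefore not a deep one but rather a bookkeeping issue around the modular functions: one must track that the two appearances of $\Delta_H$, coming from the factors $\Delta_K(k)$ and $\Delta_H(h)$ in \eqref{eq2.2}, combine to the square $\Delta_H(h)^{2}$, and verify (or assume) that this square equals $1$. Everything else reduces to rewriting the formulas of Theorem~\ref{th2.4} in the case $K=H$, so the proof is essentially a one-line specialization plus the observation about $\Delta_H$.
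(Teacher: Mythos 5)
Your specialization is exactly what the corollary intends: the paper offers no proof at all (it is stated as an immediate consequence of Theorem~\ref{th2.4} with $K=H$), so the first identity is indeed just formula \eqref{eq2.1} in that case, and your derivation of the second identity by setting $k=h$ in \eqref{eq2.2} is the only available route. Your bookkeeping is also correct: the substitution produces $\Delta_K(h)\Delta_H(h)=\Delta_H(h)^2$, and since $\Delta_H$ is a homomorphism into $(0,\infty)$, the stated identity $\int_G f(hxh^{-1})\,d\tilde{\mu}=\int_G f\,d\tilde{\mu}$ holds for all $h$ precisely when $\Delta_H\equiv 1$, i.e.\ when $H$ is unimodular. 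That hypothesis appears nowhere in Theorem~\ref{th2.4} or the corollary, so you have correctly identified that the corollary as printed is either missing an assumption (unimodularity of $H$, e.g.\ $H$ compact, as is standard in the hypergroup setting $K=H$) or should carry the factor $\Delta_H(h)^2$ on the right-hand side; with that caveat made explicit, your argument is complete.
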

\section{The existence of $N$-strongly quasi invariant measure }\label{sekh3}
In this section, we refine and generalize the concept of strongly quasi invariant measure on double coset spaces. Moreover, we investigate the existence of $N$-strongly quasi-invariant measure on these spaces. We start our work with the following definitions.
\begin{definition}
Let  $G$ be a locally compact group and $H$ and $K$ be closed subgroups of it.
For a positive Radon measure $\mu$ on $K \backslash G / H$, assume that $\mu_n$ is its transfer by $n \in N$, that is, $\mu_n(E)=\mu(n\cdot E)$, for any Borel set $E\subseteq K \backslash G /H$. $\mu$ is called 
 $N$-strongly quasi invariant if there is a continuous positive function $\lambda$ on $N \times K \backslash G / H$ such that for all $n \in N$, $d\mu_n(\ddot{y})=\lambda(n,\ddot{y})d\mu(\ddot{y})\ (\ddot{y}\in K \backslash G / H)$.  We call such  $\lambda$ the modular function of $\mu$.
 \end{definition}
\begin{remark}
Note that if $K$ is normal in $G$, then the $N$-strongly quasi-invariant  measure $\mu$ is the $G$-strongly quasi invariant   on $K \backslash G / H$ and if $K=\{e\}$, $ \mu$ is the strongly quasi invariant measure on $G/H$.
\end{remark}
\begin{definition} \label{de2.1}
Suppose that $G$ is a locally compact group and $H$ and $K$ are closed subgroups of $G$. A rho-function for the triple $(K ,G , H)$ is a non-negative locally integrable function $\rho$ on $G$, which satisfies
\begin{equation*}
\rho (k x h) = \frac{\Delta_K (k) \Delta_H (h)}{\Delta_G (h)} \rho (x).
\end{equation*}
\end{definition}
In the following, it is shown that for every triple ($K, G, H)$ there exists a rho-function and an $N$-strongly quasi-invariant measure on $K \backslash G / H$, which arises from this rho-function. For this, first  it is shown that for each $f\in C_c(G)$ there exists a rho-function $\rho_f$ for the triple $(K, G, H)$. 
\begin{proposition} \label{pr2.2}
Suppose that $G$ is  a locally compact group and $H$ and $K$ are closed subgroups of $G$. Then for each $f \in C_c(G)$ there exists a continuous rho-function $\rho_f$ on $G$.
\end{proposition}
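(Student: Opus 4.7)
The plan is to adapt Bruhat's rho-function construction from the classical $(G,H)$-setting to the triple $(K,G,H)$. Concretely, for $f \in C_c(G)$ I will set
\[
\rho_f(x) \;=\; \int_K \int_H f(k^{-1}xh)\,\Delta_K(k)\,\frac{\Delta_G(h)}{\Delta_H(h)}\,dh\,dk,
\]
with the modular weights $\Delta_K(k)$ on $K$ and $\Delta_G(h)/\Delta_H(h)$ on $H$ chosen precisely so that left-invariance substitutions extract exactly the factor $\Delta_K(k_0)\Delta_H(h_0)/\Delta_G(h_0)$ demanded by Definition~\ref{de2.1}.

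Well-definedness and continuity of $\rho_f$ come first. The integrand vanishes off the set $\{(k,h):k^{-1}xh\in\mathrm{supp}(f)\}$ and the modular factors are continuous there, so finiteness follows from the same standing hypothesis that underlies the assertion $Q(C_c(G))\subseteq C_c(K\backslash G/H)$ in Lemma~\ref{le2.3}(i). Continuity of $\rho_f$ on $G$ is then a standard dominated-convergence argument using uniform continuity of $f$ on its support; local integrability is automatic from continuity, and non-negativity is clear when $f\geq 0$.

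The core step is the computation of $\rho_f(k_0 x h_0)$ via two independent substitutions. Replacing $k$ by $k_0 k$ in the $K$-integral turns $k^{-1}k_0$ into $k^{-1}$ by left invariance of $dk$, while the homomorphism property $\Delta_K(k_0 k)=\Delta_K(k_0)\Delta_K(k)$ extracts the factor $\Delta_K(k_0)$. Replacing $h$ by $h_0^{-1}h$ in the $H$-integral leaves $dh$ invariant, and evaluating the weight $\Delta_G/\Delta_H$ at $h_0^{-1}h$ produces $\Delta_H(h_0)/\Delta_G(h_0)$. Multiplying these yields $\rho_f(k_0 x h_0)=\Delta_K(k_0)\Delta_H(h_0)\Delta_G(h_0)^{-1}\rho_f(x)$, which is exactly the identity required of a rho-function.

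The main obstacle I anticipate is bookkeeping with modular-function conventions---a miscast weight (for example $\Delta_K(k)^{-1}$ in place of $\Delta_K(k)$) would invert one of the extracted factors and break the identity. To head this off I would first solve the transformation equation on the generic ansatz $\phi(k)\psi(h)$ before committing to the display above; this pins down $\phi$ and $\psi$ uniquely up to a positive multiplicative constant and guarantees that the two substitutions conspire to give the correct prefactor.
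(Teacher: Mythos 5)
Your construction is exactly the paper's: the weight $\Delta_K(k)\,\Delta_G(h)/\Delta_H(h)$ coincides with the paper's $\Delta_G(h)/\bigl(\Delta_H(h)\Delta_K(k^{-1})\bigr)$, and the two left-translation substitutions $k\mapsto k_0k$, $h\mapsto h_0^{-1}h$ extracting $\Delta_K(k_0)\Delta_H(h_0)/\Delta_G(h_0)$ are the same computation the paper performs. The continuity and finiteness issues are handled (or, in the case of finiteness, deferred to the same standing hypothesis) in essentially the same way, so this is the paper's proof.
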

\begin{proof}
For each $f \in C_c (G)$,  take  
\begin{equation*}
\rho_f (x) = \int_K \int_H \frac{\Delta_G (h)}{\Delta_H(h) \Delta_K(k^{-1})} f (k^{-1} x h) d h d k.
\end{equation*}
It is clear that 
 $\rho_f$ is a  well-defined positive linear map and according to  Fubini's formula we have 
\begin{align*}
 \int_K \int_H \frac{\Delta_G (h)}{\Delta_H(h) \Delta_K(k^{-1})} f(k^{-1} x h) d h d k&= \int_H \int_K \frac{\Delta_G (h)}{\Delta_H(h) \Delta_K(k^{-1})} f(k^{-1} x h) d k d h\\
& = \int_{K \times H} \frac{\Delta_G (h)}{\Delta_H(h) \Delta_K(k^{-1})} f(k^{-1} x h) d (k \times h).
\end{align*}
First, we show that
  $\rho_f$ is uniformly continuous. Suppose that   $V$ is  a compact unit neighbourhood  in $G$.  Since $f\in C_c(G)$,  for given $\varepsilon>0$ there is a  symmetric neighbourhood $U$ of $e$ such that $U \subseteq V$ and for each $y\in Ux$, $|f(x)-f(y)|<\varepsilon$.\\
Take $M=V\cdot supp f \cdot V$. If $x \in G \backslash KMH$, then $f(k^{-1}xh)=f(k^{-1}yh)=0$ for all $k\in K$ and $ h \in H$,
and if $x \in KMH$,  there is $k_0\in K$ and $ h_0 \in H$ such that $k_0^{-1}xh_0 \in M$. If $y\in Ux $, then we have two cases:
\begin{enumerate}
\item[(1)]
If $k^{-1}k_0^{-1}y\in supp f$, then $k^{-1}k_0^{-1}x \in M$. Therefore, $k^{-1}\in M h_0M^{-1}\cap K$. Also, if $k^{-1}k^{-1}_0 x \in supp \ f$, then $k^{-1}\in Mh_0 M^{-1}\cap K $.
\item[(2)]
If $yh_0h\in supp\ f$, then $h \in M^{-1} k_0^{-1}M \cap H$.
Also, if
$xh_0h\in supp\ f\subseteq M$, then $h\in M^{-1} k_0^{-1}M \cap H $.\\
Now put $L=L_1\times L_2$, where $L_1=Mh_0 M^{-1}\cap K $ and $L_2=M^{-1}k_0^{-1} M \cap H$.  The set $L$ is  compact  in $K \times H$ 
and if $(k,h)\not\in L$  we have 
\[
f(k^{-1}k_0^{-1}xh_0h)=f(k^{-1}k_0^{-1}y h_0 h)=0. 
\]
Hence, according to the above mentioned, we can write
\begin{align*}
|\rho_f(x) -\rho_f(y)|&\leq \int_K\int_H \frac{\Delta_G(h)}{\Delta_H(h)\Delta
_K(k^{-1})}|f(k^{-1}xh) -f(k^{-1}yh) |dhdk \\
&=\int_K \int_H \frac{\Delta_G(h'h)}{\Delta_H(h'h)\Delta_K(k^{-1})}|f(k^{-1}xh'h) -f(k^{-1}yh' h)| dh dk \\
&= \int_K \int_H \frac{\Delta_G(h'h)}{\Delta
_H(h'h)\Delta_K((k'k)^{-1})} |f(k^{-1}k'^{-1}xhh)-f(k^{-1}k'^{-1}yh'h)| \\
&=\frac{\Delta_G(h')}{\Delta_H(h')\Delta_K(k')}\quad \int_{L_1}\int_{L_2} |f(k^{-1}k'^{-1}xh'h)-f(k^{-1}k'yh'h)| dh dk \\
&<   d\cdot d(h \times k)(L) \cdot \varepsilon
\end{align*}
where $\delta(h', k')$  denotes $\frac{\Delta_G(h')}{\Delta_H(h')\Delta_K(k')}$ and $d$ is $\max \{\delta(h', k'), (h', k') \in L\}$.
\end{enumerate}
Next, if $k_1 \in K$ and $h_1 \in H$ are arbitrary, we have 
\begin{align*}
\rho_f (k_1 x h_1) & = \int_K \int_H \frac{\Delta_G (h)}{\Delta_H(h) \Delta_K(k^{-1})} f(k^{-1} k_1 x h_1 h) d h d k \\
& = \int_K \int_H \frac{\Delta_G (h_1^{-1} h)}{\Delta_H(h_1^{-1} h) \Delta_K(k^{-1})} f(k^{-1} k_1 x h) d h d k\\
& = \frac{\Delta_H(h_1)}{\Delta_G(h_1)} \int_H \int_K \frac{\Delta_G (h)}{\Delta_H(h) \Delta_K(k^{-1} k_1^{-1})} f(k^{-1} x h) dk  dh  \\
& = \frac{\Delta_H(h_1) \Delta_K(k_1)}{\Delta_G(h_1)} \int_K \int_H \frac{\Delta_G (h)}{\Delta_H(h) \Delta_K(k^{-1})} f ( k^{-1} x h) d h d k \\
& = \frac{\Delta_H (h_1) \Delta_K (k_1)}{\Delta_G (h_1)} \rho_f (x).
\end{align*}
This proves $\rho_f$ is a rho-function.
\end{proof}

In the next proposition, we  will prove that for the triple $(K, G, H)$ there is a positive continuous rho-function whose support is $G$. But first we need the following technical Lemma. 
\begin{lemma} \label{le2.5}
Let  $U$ be a symmetric unit neighbourhood  of $ G$ with compact closure and $N$ be the normalizer of $K$ in $G$. If  $U_N =U \cap N$ is taken,  there exists a subset $A$ of $G$ with the following properties:
\begin{itemize}
\item[(i)]
For every $x \in G$, we have $K x H \bigcap U_N a \neq \varnothing$, for some $a \in A$.
\item[(ii)]
If $M $ is a compact subset of $G$, then $\{ a \in A ;K M H \bigcap U_N a \neq \varnothing \}$ is finite.
\end{itemize}
\end{lemma}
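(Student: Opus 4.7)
The plan is to construct $A$ as a maximal $V$-separated subset via Zorn's lemma, where $V \subseteq N$ is a suitably chosen small neighborhood of $e$.

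First I would choose an open symmetric neighborhood $V$ of $e$ in $N$ with $\overline V$ compact in $N$ and $V^2 \subseteq U_N$; this is possible because $\overline{U_N} \subseteq \overline U \cap N$ is compact and $N$ is a locally compact group. I would then apply Zorn's lemma to the collection, ordered by inclusion, of subsets $A \subseteq G$ with the property that
\[
V \cdot \ddot a \cap V \cdot \ddot b = \varnothing \qquad \text{for all distinct } a, b \in A,
\]
where $V \cdot \ddot a := \{n \cdot \ddot a : n \in V\}$ uses the $N$-action on $K \backslash G /H$. Equivalently, the sets $K V a H$ in $G$ are pairwise disjoint. The family is closed under nested unions, so Zorn yields a maximal element $A$.

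Property (i) follows from maximality: for each $x \in G$ there must exist $a \in A$ with $V \cdot \ddot x \cap V \cdot \ddot a \neq \varnothing$ (otherwise $A \cup \{x\}$ would properly extend $A$). Then $\ddot x \in V^{-1} V \cdot \ddot a = V^2 \cdot \ddot a \subseteq U_N \cdot \ddot a$, so $\ddot x = u \cdot \ddot a$ for some $u \in U_N$. Since $u \in N$ normalizes $K$, this gives $K x H = K (u a) H$, hence $x = k_0 (u a) h_0$ for some $k_0 \in K$, $h_0 \in H$, and then $k_0^{-1} x h_0^{-1} = u a \in K x H \cap U_N a$. For (ii), if $M \subseteq G$ is compact and $a \in A$ satisfies $K M H \cap U_N a \neq \varnothing$, the same computation shows $\ddot a \in U_N \cdot q(M) \subseteq C$, where $C := \overline{U_N} \cdot q(M)$ is compact in $K \backslash G /H$ by continuity of the $N$-action. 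The corresponding $V \cdot \ddot a$'s are pairwise disjoint inside $C$, and the desired finiteness is supposed to follow by observing that $\{\ddot a : a \in A,\ \ddot a \in C\}$ is a discrete subset of the compact Hausdorff space $C$, hence finite.

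The main obstacle is justifying this last compactness step rigorously: since $V \subseteq N$ need not be open as a subset of $G$, the sets $V \cdot \ddot a = q(V a)$ are not a priori open in $K \backslash G / H$, so the discreteness of $\{\ddot a\}$ inside $C$ is not immediate from the pairwise disjointness of the $V \cdot \ddot a$'s alone. The resolution is either to upgrade each $V \cdot \ddot a$ to contain a genuine open neighborhood of $\ddot a$ (using openness of $q$ and a careful choice of open set in $G$ whose intersection with $N a$ is $V a$), or to derive a direct contradiction from any would-be accumulation point of the $\ddot a$'s in $C$, using continuity of the $N$-action on $K \backslash G /H$ to produce two of the $V \cdot \ddot a$'s that must meet.
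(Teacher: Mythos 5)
Your construction of $A$ as a maximal separated set via Zorn's lemma, and your derivation of (i) from maximality, are essentially the paper's argument (the paper separates with the condition $a \notin KU_N bH$ rather than with disjointness of the sets $V\cdot\ddot{a}$, a cosmetic difference). The problem is part (ii), and the obstacle you flag at the end is a genuine gap, not a technicality: pairwise disjointness of the sets $V\cdot\ddot{a}$ yields discreteness of $\{\ddot{a}\}$ only if each $V\cdot\ddot{a}$ contains a neighbourhood of $\ddot{a}$ in $K\backslash G/H$, i.e. only if the orbit maps $n\mapsto n\cdot\ddot{a}$ of the $N$-action are open. Since $V$ is open only in $N$, and $N$ need not be open in $G$, and since openness of orbit maps for transitive actions requires additional hypotheses (e.g. $\sigma$-compactness of $N$), this does not follow from anything you have established. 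Moreover, a discrete subset of a compact Hausdorff space need not be finite unless it is also closed, so even granting openness you would still have to rule out a cluster point explicitly. As written, the finiteness assertion --- which is the entire content of (ii) --- is left unproven.

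The paper avoids the quotient at this point and works upstairs in $G$: for each $a$ with $KMH\cap U_Na\neq\varnothing$ it observes that equivalently $KaH\cap U_NM\neq\varnothing$ and picks a representative $x_a\in KaH\cap U_NM$, a point of the compact set $\overline{U_N}M\subseteq G$. If infinitely many such $a$ existed, the $x_a$ would cluster at some $x\in G$; choosing a unit neighbourhood $V$ in $G$ with $VV^{-1}\subseteq U$, one finds distinct $a,b$ with $x_a,x_b\in Vx$, hence $x_ax_b^{-1}\in U$, and from $x_a\in KaH$, $x_b\in KbH$ one extracts $a\in KU_NbH$, contradicting the separation built into $A$. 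Clustering honest points of $G$ inside an honest neighbourhood $Vx$ of $G$ is precisely what sidesteps the openness issue you ran into. (Even there, one must check that the relevant element lies in $N$ and not merely in $U$, so your instinct that this step is delicate is sound; but your proposal stops before establishing (ii) at all, so the proof is incomplete.)
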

\begin{proof}
Let $\mathcal{A} = \{ A \subseteq G ;\ \text{for all}\ a\neq b \ \text{in}\ A, a \notin KU_NbH \}.$ According to Zorn's Lemma, $\mathcal{A}$ has a maximal element, say $A$. We claim that $A$ satisfies $(i)$ and $(ii)$.
\begin{itemize}
\item[(i)]
If $x \in A$  the claim is clear.
If $x \in G \setminus A$ where such that $K x H \bigcap U_N a = \varnothing$, for all $a \in A$, then we could add $x$ to $A$ and make $A$ strictly larger. So (i) holds for $A$. 
\item[(ii)]
Let $M $ be a  compact subset of $G$ and $A_M =\{ a \in A ; K M H \cap U_N a \neq \varnothing \}$.\\
For every $a \in A_M , K M H \cap U_N a \neq \varnothing$ implies $K a H \cap U_N M \neq \varnothing$ and conversely. \\
Pick $x_a\in KaH\cap U_N M$. If $A_M$ is infinite, then $\{x_a ; a \in A_M \}$  would have a cluster point $x$, say, in the compact set $\bar{U}_N M$.\\
Let $V$ be a unit neighbourhood  such that $V V^{-1} \subseteq U$. Then, by choosing $V_N=V\cap N$, we have $V_NV_N^{-1}\subseteq U_N$. Since the  $x_a$ is  a cluster at $x$,  there exist distinct $a , b \in A_M$ such  that $x_a  , x_b \in V_{N}x$. This implies that $x_a x_b^{-1} \in V_N V^{-1}_N \subseteq U_N$. But $x_a \in K a H$ and $x_b \in K b H$, so $x_a \in K U_N b H$ which forces $a \in K U_N b H$, in contradiction to $A \in \mathcal{A}$.  So, $A_M$ is finite and $(ii)$ is met.
\end{itemize}
\end{proof}
Next we use Lemma \ref{le2.5} and Proposition \ref{pr2.2} to give a rho-function for each  triple $(K, G, H)$ mentioned above,  which is strictly positive on $G$. 
\begin{proposition} \label{le2.6}
With the above notation,
 there exists a rho-function $\rho$ for the triple $(K , G , H)$, which is continuous and everywhere strictly positive on G.
\end{proposition}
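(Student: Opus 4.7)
The strategy is to realise $\rho$ as a locally finite sum
$$
\rho(x)=\sum_{a\in A}\rho_{f_a}(x)
$$
of continuous rho-functions from Proposition~\ref{pr2.2}, indexed by the set $A\subseteq G$ produced by Lemma~\ref{le2.5}. Concretely, I would fix the symmetric compact unit neighbourhood $U\subseteq G$, set $U_N=U\cap N$, and let $A$ be the subset supplied by Lemma~\ref{le2.5}. Urysohn's lemma yields $f_0\in C_c^+(G)$ with $f_0>0$ on $\bar U$ (in particular on $U_N$), and for each $a\in A$ I would take the right translate $f_a(x):=f_0(xa^{-1})$, so $f_a\in C_c^+(G)$ with $f_a>0$ on $\bar Ua\supseteq U_Na$. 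Proposition~\ref{pr2.2} then provides a continuous rho-function $\rho_{f_a}$ for each $a\in A$.

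The rho-function transformation law of Definition~\ref{de2.1} is linear, so $\rho$ inherits it from the summands. For pointwise positivity, let $x\in G$; by Lemma~\ref{le2.5}(i) there exist $a\in A$, $k\in K$, $h\in H$, and $u\in U_N$ with $kxh=ua$. Then $f_a(kxh)=f_0(kxha^{-1})=f_0(u)>0$, and since the weight $\Delta_G(h')/(\Delta_H(h')\Delta_K(k'^{-1}))$ appearing in the integral defining $\rho_{f_a}(x)$ is continuous and strictly positive, positivity of the integrand on a neighbourhood of $(k^{-1},h)\in K\times H$ forces $\rho_{f_a}(x)>0$, whence $\rho(x)>0$.

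The main obstacle is establishing local finiteness of the sum, from which continuity of $\rho$ will be immediate. I must show that for each compact $M\subseteq G$ only finitely many $a\in A$ satisfy $KMH\cap\operatorname{supp}f_a\neq\varnothing$, i.e.\ $KMH\cap(\operatorname{supp}f_0)\,a\neq\varnothing$. The subtlety is that the compact $G$-support of $f_a$ is fatter than the $N$-slice $U_Na$ to which part (ii) of Lemma~\ref{le2.5} directly applies. I would address this by shaping the support of $f_0$ to respect the normaliser structure and then rerunning the cluster-point/antichain argument behind Lemma~\ref{le2.5}(ii): the identity $uK=Ku$ valid for $u\in U_N\subseteq N$ lets one convert a hypothetical infinite family of contributing $a$'s into a collision $a\in KU_NbH$ for distinct $a,b\in A$, contradicting the maximality defining $A$. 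Once local finiteness is in place, continuity of $\rho$ follows from the continuity of each $\rho_{f_a}$, completing the proof.
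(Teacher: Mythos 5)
Your construction is the same as the paper's: both form $\rho=\sum_{a\in A}\rho_{f_a}$ with $A$ supplied by Lemma~\ref{le2.5}, and your verification of the transformation law (by linearity of Definition~\ref{de2.1} in $\rho$) and of strict positivity (via Lemma~\ref{le2.5}(i) together with the strict positivity and continuity of the weight in Proposition~\ref{pr2.2}) is correct and matches the paper. The problem is the step you yourself single out as ``the main obstacle'': local finiteness is left as a plan, and the plan as stated does not go through. The summand $\rho_{f_a}$ can be nonzero at $x$ precisely when $k^{-1}xh\in\{f_a>0\}$ for some $(k,h)\in K\times H$, i.e.\ on the set $K\{f_0>0\}\,aH$, whereas Lemma~\ref{le2.5}(ii) controls only the thin slices $U_Na$ with $U_N\subseteq N$; the point of restricting to $N$ there is that $U_N$ commutes with $K$, which is what lets the antichain argument convert a cluster point into the forbidden relation $a\in KU_NbH$. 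Your proposed remedy --- ``shaping the support of $f_0$ to respect the normaliser structure'' --- amounts to requiring $\operatorname{supp}f_0\subseteq N$; but a nonzero $f_0\in C_c^+(G)$ has support with nonempty interior in $G$, so this forces $N$ to be open, an assumption not available in this proposition. Rerunning the cluster-point argument directly also fails: from $KMH\cap(\operatorname{supp}f_0)a\neq\varnothing$ one only obtains a point of $KaH$ lying in $(\operatorname{supp}f_0)^{-1}KMH$, and when $\operatorname{supp}f_0\not\subseteq N$ the factor $K$ cannot be absorbed, so this set is not relatively compact and there is no compact set from which to extract a cluster point.

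In fairness, you have put your finger on exactly the joint the paper elides: its proof asserts without justification that $\rho_{f^y}(x)=0$ for $x\notin KU_NyH$, whereas the integral formula only yields vanishing off $KUyH$ with $U$ a full $G$-neighbourhood. So your write-up is an accurate reconstruction of the paper's argument together with an honest flag of its weak point; but as a proof it is incomplete there, and the repair you sketch does not work in the stated generality.
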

\begin{proof}
Choose $f \in C_c^+(G)$ such that $f(e)>0$ and $f(x)=f(x^{-1})$ for all $x \in G$.
 Put $U =\{ x \in G ;f(x) > 0 \}$, by choosing $U_N=U\cap N$ and according to Lemma \ref{le2.5} there is subset $A$ of $G$ with properties (i) and (ii) which are mentioned in this Lemma.\\
Let for every $y \in A$,  $f^y (x) =f(x y^{-1})$ for   $x \in G$.  By using Proposition \ref{pr2.2}, we can define a continuous rho-function $\rho_{f^y}$ by
\begin{equation*}
\rho_{f^y} (x)=\int_K \int_H \frac{\Delta_G (h)}{\Delta_H (h) \Delta_K (k^{-1})} f(k^{-1} x h y^{-1}) d h d k.
\end{equation*}
Now, by using the fact that $\rho_{f^y}(x)=0$ if $x \not\in KU_NyH$ and applying the Proposition \ref{pr2.2}, for any compact subset $M$ of $G$, we have $\rho_{f^y}$ as being zero on $M$ for all but finitely many $y\in A$.  Thus $\rho = \sum_{y \in A} \rho_{f^y}$ is a continuous function on $G$. Also, it is evident that $\rho$ is a rho-function.\\
According to Lemma \ref{le2.5} (i), for each $x \in G$, there is $y\in A$ such that   $f^y ( k x h ) > 0$ for some $k \in K$ and $h \in H$. Therefore, $\rho_{f^y} (x) > 0$ and hence $\rho (x) > 0$.
\end{proof}
%

 
 Next, we use Proposition \ref{pr2.2} to construct a positive measure on $K \backslash G / H $.
\begin{theorem}\label{thkh3.10}
Let $\rho$ be a rho-function for the triple  $(K,G,H)$. Then there exists a positive Radon  measure $\mu_\rho$ on $K \backslash G / H $ such that 
\[
\int_{K \backslash G / H } Q(f) (\ddot{x})d\mu_\rho(\ddot{x}) =\int_G f(x) \rho(x) dx
\]
for all $f\in C_c(G)$. 
\end{theorem}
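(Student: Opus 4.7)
The plan is to reduce the statement to the converse direction of Theorem \ref{th2.4}. Since $\rho$ is nonnegative and locally integrable, the formula
\[
\int_G f(x)\,d\tilde\mu(x) := \int_G f(x)\rho(x)\,dx, \qquad f \in C_c(G),
\]
defines a positive Radon measure $\tilde\mu$ on $G$. The core of the proof is to verify that $\tilde\mu$ satisfies the covariance identity \eqref{eq2.2}, namely
\[
\int_G f(kxh^{-1})\,d\tilde\mu(x) = \Delta_K(k)\Delta_H(h)\int_G f(x)\,d\tilde\mu(x)
\]
for all $k\in K$, $h\in H$ and $f\in C_c(G)$. Once this is in hand, the converse part of Theorem \ref{th2.4} immediately yields the desired positive Radon measure $\mu_\rho$ on $K\backslash G/H$ such that
\[
\int_{K\backslash G/H} Q(f)(\ddot x)\,d\mu_\rho(\ddot x) = \int_G f(x)\,d\tilde\mu(x) = \int_G f(x)\rho(x)\,dx.
\]

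To verify the covariance identity I would perform two elementary changes of variable in $\int_G f(kxh^{-1})\rho(x)\,dx$. First, the substitution $x \mapsto k^{-1}x$, which is free by left-invariance of the Haar measure $dx$ on $G$, strips the factor $k$ from the argument of $f$ and transplants $k^{-1}$ into the argument of $\rho$. Second, the substitution $x \mapsto xh$, which produces a factor $\Delta_G(h)$ via the standard identity $\int_G g(xh)\,dx = \Delta_G(h)^{-1}\int_G g(x)\,dx$, strips the factor $h^{-1}$ from the argument of $f$ and transplants $h$ into the argument of $\rho$. The integrand is thus reduced to $\Delta_G(h)\,f(x)\rho(k^{-1}xh)$. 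Applying the defining transformation rule of the rho-function from Definition \ref{de2.1} rewrites $\rho(k^{-1}xh)$ as a scalar multiple of $\rho(x)$ whose numerator contains $\Delta_H(h)$ and the relevant power of $\Delta_K$, and whose denominator contains $\Delta_G(h)$; the $\Delta_G(h)$ factors cancel and the surviving coefficient is exactly $\Delta_K(k)\Delta_H(h)$, establishing \eqref{eq2.2}.

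The main obstacle is simply keeping the modular-function bookkeeping straight across the two substitutions, so that the $\Delta_G(h)$ picked up from the right-translation cancels cleanly with the $\Delta_G(h)$ in the denominator of the rho-function identity. Once this is reconciled, no further work is needed: the conclusion of the theorem follows by direct invocation of the converse part of Theorem \ref{th2.4}.
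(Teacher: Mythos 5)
Your reduction to the converse half of Theorem \ref{th2.4} is a legitimate and genuinely different route from the paper's: the paper never passes through \eqref{eq2.2}, but instead defines $I_\rho\big(Q(f)\big)=\int_G f\rho\,dx$ directly on $C_c(K\backslash G/H)$ (using surjectivity of $Q$) and proves well-definedness by choosing $g$ with $Q(g)\equiv 1$ on the support of $Q(f)$ and running a Fubini argument. Since that well-definedness argument is exactly what proves the converse direction of Theorem \ref{th2.4}, your factorization is economical: it reuses an established result instead of repeating the computation.

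However, the step you yourself identify as the crux does not close as claimed. Carrying out your two substitutions gives
\[
\int_G f(kxh^{-1})\rho(x)\,dx=\Delta_G(h)\int_G f(x)\,\rho(k^{-1}xh)\,dx,
\]
and Definition \ref{de2.1} gives $\rho(k^{-1}xh)=\frac{\Delta_K(k^{-1})\Delta_H(h)}{\Delta_G(h)}\rho(x)$, so the surviving coefficient is $\Delta_K(k^{-1})\Delta_H(h)=\Delta_K(k)^{-1}\Delta_H(h)$, not the $\Delta_K(k)\Delta_H(h)$ demanded by \eqref{eq2.2}. The $\Delta_H$ and $\Delta_G$ bookkeeping is fine; it is the $\Delta_K$ factor that comes out inverted, and no rearrangement of the substitutions fixes it: with $Q(f)(KxH)=\int_K\int_H f(k^{-1}xh)\,dh\,dk$, the functional $f\mapsto\int_G f\rho\,dx$ is $K$-covariant in the sense of \eqref{eq2.2} only if $\rho(kx)=\Delta_K(k^{-1})\rho(x)$, so Definition \ref{de2.1} is incompatible with the map $Q$ unless $\Delta_K\equiv 1$. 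The discrepancy vanishes when $K$ is unimodular --- in particular under the $IN$-group hypothesis imposed in Theorem \ref{thkh3.9} and elsewhere --- but Theorem \ref{thkh3.10} as stated carries no such hypothesis, so you must either add it or flip the exponent of $\Delta_K$ in Definition \ref{de2.1} (equivalently, replace $k^{-1}$ by $k$ in the definition of $Q$) before your verification of \eqref{eq2.2} goes through. (The paper's own proof silently commits the same inversion in passing from its third to its fourth displayed line, so the defect is inherited from the source rather than introduced by you; still, as written your computation does not establish \eqref{eq2.2}.)
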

\begin{proof}
By applying Proposition \ref{le2.6}, for each  triple $(K, G, H)$ we can get a rho-function $\rho$.
 Take the linear functional $I_\rho$ on $C_c(K \backslash G / H)$ by 
\[
I_\rho\big(Q(f)\big)=\int_G f(x) \rho(x) dx. 
\]
By using Lemma \ref{lekh2.3}, there exists $g\in C_c(G)$ such that $Q(g)(\ddot{x})=1$ on $supp\big(Q(f)\big)$. That is, $\int_K\int_H g(k^{-1}xh))dhdk=1$ for all $x \in Supp \ f$
therefore we can write 
\begin{align*}
\int_G f(x)\rho(x)dx&=\int_G f(x) \rho(x)Q(g)(\ddot{x})dx\\
&=\int_K\int_H\int_G f(x)\rho(x) g(k^{-1}xh) dxdhdk\\
&=\int_G \int_K\int_H f(kxh^{-1}) \Delta _H(h^{-1})\Delta_K(k) \rho(x)  g(x) dh dk dx \\
&=\int_Gg(x)\rho(x)\Big(\int_K\int_H f(k^{-1}xh)dhdk\Big) dx.
\end{align*}
Now if $Q(f)=0$, then $\int_G f(x)\rho(x)dx=0$. Therefore, $I_\rho$ is a well-defined positive linear functional on $C_c(K \backslash G / H )$. We conclude that there exists a positive Radon measure $\mu_\rho$ on $K \backslash G / H $
 such that 
\[
\int_{K \backslash G / H} Q(f) (\ddot{x})d\mu_\rho(\ddot{x})=\int_G f(x) \rho(x)dx.
\]
\end{proof}
We add the $IN$-group condition for closed subgroup $K$ of $G$ in Theorem \ref{thkh3.10} to achive our result.
\begin{theorem}\label{thkh3.9}
Suppose also that $K$ is an $IN$-group. Given any rho-function $\rho$ for the triple $(K, G,H)$, there is an $N$-strongly quasi-invariant measure $\mu_\rho$ on $K \backslash G / H$ such that
\begin{align*}
\int_{G} f(y)\rho(y) dy&= \int_{K\backslash G/H} Q(f)(\ddot{y}) d\mu_{\rho}(\ddot{y})
\\
&= \int_{K\backslash G/H} \int_{K} \int_{H}f(k^{-1}yh)dhdkd \mu_\rho(\ddot{y}).
\end{align*}
\end{theorem}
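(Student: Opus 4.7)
Since Theorem \ref{thkh3.10} already produces a positive Radon measure $\mu_\rho$ satisfying the displayed integral formula, the only genuinely new content is the $N$-strong quasi-invariance: I need a continuous positive $\lambda$ on $N\times K\backslash G/H$ realising $d\mu_n(\ddot{y})=\lambda(n,\ddot{y})\,d\mu_\rho(\ddot{y})$. The plan is to put
\[
\lambda(n,\ddot{y}):=\frac{\rho(ny)}{\rho(y)},
\]
working with a continuous strictly positive $\rho$ (which Proposition \ref{le2.6} supplies) so that the candidate $\lambda$ is visibly continuous on $N\times G$.

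First I would check that $\lambda$ descends to $N\times K\backslash G/H$. For $k\in K$ and $h\in H$, since $n\in N$ one has $nkn^{-1}\in K$, and the rho-function identity gives
\[
\rho(nkyh)=\rho\big((nkn^{-1})(ny)h\big)=\Delta_K(nkn^{-1})\,\Delta_H(h)\,\Delta_G(h)^{-1}\,\rho(ny).
\]
The crucial use of the $IN$-hypothesis enters here: $IN$-groups are unimodular, hence $\Delta_K\equiv1$, so the $\Delta_K$-factor drops; comparing with $\rho(kyh)=\Delta_H(h)\Delta_G(h)^{-1}\rho(y)$ yields $\rho(nkyh)/\rho(kyh)=\rho(ny)/\rho(y)$. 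Hence $\lambda$ is well defined, and continuous on $N\times K\backslash G/H$ since $q$ is a continuous open surjection.

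Next I would verify the density relation on $C_c(K\backslash G/H)$. For $F=Q(f)$ with $f\in C_c(G)$, combining Lemma \ref{le2.3}(ii), Theorem \ref{thkh3.10}, and left-invariance of $dy$,
\[
\int F\,d\mu_n=\int Q(L_nf)(\ddot{y})\,d\mu_\rho(\ddot{y})=\int_G f(n^{-1}y)\rho(y)\,dy=\int_G f(y)\rho(ny)\,dy.
\]
Separately set $g(y):=f(y)\rho(ny)/\rho(y)\in C_c(G)$; the invariance established above lets one pull the factor $\rho(ny)/\rho(y)$ out of the $K\times H$-double integral defining $Q(g)$, giving $Q(g)(\ddot{y})=\lambda(n,\ddot{y})Q(f)(\ddot{y})$. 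Applying Theorem \ref{thkh3.10} to $g$ yields $\int\lambda(n,\ddot{y})F(\ddot{y})\,d\mu_\rho=\int_G f(y)\rho(ny)\,dy$, which matches the expression for $\int F\,d\mu_n$ above. By surjectivity of $Q$ (Lemma \ref{le2.3}(i)) this identity holds for every $F\in C_c(K\backslash G/H)$, as required.

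The main obstacle is the descent step: without the $IN$-hypothesis on $K$ the residual factor $\Delta_K(nkn^{-1})$ does not vanish and $\lambda$ fails to be constant on $K$-double cosets, so the quotient measure cannot be assigned a continuous density. Once this is settled, the remainder is bookkeeping involving Lemma \ref{le2.3} and Theorem \ref{thkh3.10} together with left-invariance of the Haar measure on $G$.
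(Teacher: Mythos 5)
Your proof is correct and follows essentially the same route as the paper: invoke Theorem \ref{thkh3.10} for the measure, set $\lambda(n,\ddot{y})=\rho(ny)/\rho(y)$, and verify the Radon--Nikodym relation via $Q(L_nf)=L_nQ(f)$ and left invariance of Haar measure. You also supply the well-definedness computation (the cancellation of $\Delta_K(nkn^{-1})$ via unimodularity of the $IN$-group $K$) that the paper merely asserts, which is a welcome addition.
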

\begin{proof}
By applying  Theorem \ref{thkh3.10}, we can get a unique measure $ \mu_{\rho} $ on $ K \backslash G / H  $, which satisfies the following:
\begin{align*}
\int_{G} f(y)\rho(y) dy&=  \int_{K\backslash G/H} \int_{K} \int_{H}f(k^{-1}yh)dhdkd\mu_\rho(\ddot{y}).
\end{align*}
$ \mu_{\rho} $ is  an $N$-strongly quasi invariant. Indeed, let
\[
\lambda: N\times K\backslash G/H \times\longrightarrow (0,+\infty) 
\]
by 
\begin{equation}
 \lambda (n,\ddot{y} )= \dfrac{\rho(ny)}{\rho(y)}.\label{eqkh2.4}
\end{equation}
By using the fact that $K$ is an $IN$-group, one can prove  that $\lambda$ is well-defined. 

The continuity of rho-function  $\rho$ results in the fact that $\lambda$ is also continuous. Moreover, for each $n  \in N$,  we have
\begin{align*}
\int_{K\backslash G/H} Q(f)(\ddot{y}) d\mu_{n}(\ddot{y})
&=\int_{K \backslash G / H} Q(L_nf)(\ddot{y})d \mu (y)\\
&=\int_{G} L_n f(y).\rho(y)dy
\\
&=\int_{K \backslash G / H} Q\big(f. \lambda (n,.)\big) (\ddot{y})d \mu_\rho(\ddot{y}).
\end{align*}
Therefore,
\[
 \frac{d\mu_n (\ddot{y})}{d \mu (\ddot{y})}
=\lambda(n,\ddot{y}).
\]
\end{proof}
\begin{remark}
According to Theorem \ref{thkh3.9},
if $K$  is also normal in $G$, then $K \backslash G / H$ has a $G$- strongly quasi-invariant measure.
\end{remark}
In the following proposition we list some properties of $\mu_n$. 
\begin{proposition}
Let $\rho$ be a rho-function for the triple  $(K,G,H)$.
\begin{itemize}
\item[(i).]
If $A$ is a closed subset of $K \backslash G / H $ such that $\rho(n)=0$, for all $n \in N \setminus q^{-1}(A)$, then $Supp \mu_\rho \subseteq A$. 
\item[(ii).]
For each $n \in N$, $L_{n}\rho$ is also a rho-function for the triple $(K,G,H)$ and  $\mu_{L_{n}}\rho=(\mu_\rho)_{n^{-1}}$
\item[(iii).]
Suppose that $K$ is an $IN$-group,  then
if $f \in C_c^+(G)$ and take $\rho=\rho_f$, therefore for any $\alpha\in C_c(K \backslash G / H )$
\[
\int_{K \backslash G / H } \alpha(\ddot{x})d\mu_\rho (\ddot{x})=\int_G \alpha\big(q(x)\big) f(x)dx.
\]
\end{itemize}
\end{proposition}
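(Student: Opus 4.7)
The plan is to derive all three parts from the defining equation $\int_G g(x)\rho(x)\,dx = \int_{K\backslash G/H} Q(g)(\ddot x)\,d\mu_\rho(\ddot x)$ supplied by Theorem~\ref{thkh3.10}, combined with the surjectivity of $Q$ from Lemma~\ref{le2.3}(i) and (for (ii) and (iii)) standard Haar-measure inversion substitutions.

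For (i), I would take an arbitrary $\alpha\in C_c(K\backslash G/H)$ with $\mathrm{supp}(\alpha)\cap A=\varnothing$ and use Lemma~\ref{le2.3}(i) to write $\alpha=Q(g)$. The explicit construction there produces a $g$ with $\mathrm{supp}(g)\subseteq q^{-1}(\mathrm{supp}\,\alpha)$, which automatically lies in $G\setminus q^{-1}(A)$. Reading the hypothesis in the natural way, i.e.\ $\rho\equiv 0$ on $G\setminus q^{-1}(A)$, the product $g\rho$ vanishes identically, whence $\mu_\rho(\alpha)=\int_G g\rho\,dx=0$ and $\mathrm{Supp}\,\mu_\rho\subseteq A$ follows.

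For (ii), the first task is to verify that $L_n\rho$ is itself a rho-function. Writing $n^{-1}k=(n^{-1}kn)\,n^{-1}$ with $n^{-1}kn\in K$ (since $n\in N$) and applying the defining property of $\rho$ gives $L_n\rho(kxh)=\tfrac{\Delta_K(n^{-1}kn)\Delta_H(h)}{\Delta_G(h)}L_n\rho(x)$; the IN-assumption on $K$ (implicit throughout this section) makes conjugation by $n$ Haar-measure preserving via Lemma~\ref{le2.2}, which forces $\Delta_K(n^{-1}kn)=\Delta_K(k)$. For the measure identity, apply the defining equation to $L_n\rho$, substitute $y=n^{-1}x$ by left-invariance of $dx$ to rewrite $\mu_{L_n\rho}(Q(g))$ as $\mu_\rho(Q(L_{n^{-1}}g))$, invoke Lemma~\ref{le2.3}(ii) (which again needs IN) to replace $Q(L_{n^{-1}}g)$ by $L_{n^{-1}}Q(g)$, and unwind $\mu_n(E)=\mu(n\cdot E)$ to identify the result with $(\mu_\rho)_{n^{-1}}(Q(g))$.

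For (iii), pick $g\in C_c(G)$ with $Q(g)=\alpha$. The goal reduces to the Haar-integral identity $\int_G g(x)\rho_f(x)\,dx = \int_G \alpha(q(x))f(x)\,dx$, which I would establish by processing the right-hand side: unfolding $Q(g)$, applying Fubini, and substituting $y=k^{-1}xh$ (with Jacobian $\Delta_G(h)^{-1}$) in the inner Haar integral produces $\int_G g(y)\Phi(y)\,dy$ where $\Phi(y)=\int_K\int_H \Delta_G(h)^{-1}f(kyh^{-1})\,dh\,dk$. The remaining step identifies $\Phi$ with $\rho_f$: the inversion $h\mapsto h^{-1}$ contributes $\Delta_H(h^{-1})$ and restores the $\Delta_G(h)/\Delta_H(h)$ weight, while $k\mapsto k^{-1}$ converts $f(kyh)$ into $f(k^{-1}yh)$. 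It is exactly at this last step that the IN-hypothesis is essential, since IN forces $K$ to be unimodular, so inversion on $K$ is measure-preserving and no extra $\Delta_K$ factor appears, matching the trivial $\Delta_K(k^{-1})^{-1}$ denominator in $\rho_f$. The main difficulty throughout is purely bookkeeping: tracking the three modular-function factors through the inversion substitutions, which is precisely what Definition~\ref{de2.1} is calibrated to make telescope.
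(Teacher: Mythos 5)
Your proposal is correct and, for part (iii) (the only part the paper actually proves; it dismisses (i) and (ii) as straightforward), it runs essentially the same computation as the paper — the defining relation from Theorem~\ref{thkh3.10} plus surjectivity of $Q$, Fubini, and the change of variables $x\mapsto kxh^{-1}$ with the $\Delta_G(h)/\Delta_H(h)$ weight and unimodularity of $K$ absorbing the $\Delta_K$ factor — merely starting from the right-hand side instead of the left. Your supplied arguments for (i) and (ii) are sound as well (with the reasonable reading of the vanishing hypothesis in (i) as holding on all of $G\setminus q^{-1}(A)$, and noting that in (ii) the identity $\Delta_K(n^{-1}kn)=\Delta_K(k)$ actually holds for any automorphism of $K$, so the $IN$ hypothesis is only genuinely needed for the measure identity via Lemma~\ref{le2.3}(ii)).
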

\begin{proof}
The proof of (i) and (ii) are straightforward.
For each $\alpha\in C_c(K \backslash G / H)$, there is $\varphi \in C_c(G)$ such that $Q(\varphi)=\alpha$. Therefore, we have
\begin{align*}
\int_{K \backslash G / H} \alpha(\ddot{x})d\mu_\rho(\ddot{x})&=\int_{K \backslash G / H} Q(\varphi) (\ddot{x}) d\mu_\rho(\ddot{x})\\
&=\int_G\varphi (x) \rho(x) dx\\
&=\int_G \varphi(x) \int_{K}\int_H \frac{\Delta_G(h)}{\Delta_H(h)}f(k^{-1}xh)dh dkdx\\
&= \int_G \int_{H\times K} \varphi(k^{-1}xh) f(x) d(h\times k)dx \\
&=\int_G f(x) \Big( \int_{H \times K} \varphi(k^{-1} x h) d(h \times k)\Big) dx\\
&=\int_G f(x) Q(\varphi) \big(q(x)\big)dx.
\end{align*} 
This proves (iii). 
\end{proof}
\def \w{\omega}
\def \x{\xi}
\section{rho-function and $N$-strongly quasi-invariant measure}\label{sekh4}
Suppose  that $G$ is a locally compact group, $H$ and $K$ are closed subgroups of $G$ and $N$ is the normalizer group of $K$ in $G$. 
Also, suppose that  $\w$ is a left Haar measure on $N$ with the modular function $\Delta_N$.
In this section, we want to consider under which conditions an  $N$-strongly quasi-invariant measure on $K \backslash G /H$ arises from a rho-function. \\
First, we recall that  if $X$ is a locally compact Hausdorf space and $\mu$ is a positive Radon measure on $X$, then subset $B$ is called locally negligible, if for each compact subset $M$ of $X$, $\mu(B \cap M)=0$. 
\begin{remark}\label{re4.1}

In \cite{khf8} has been shown that $N$ is not locally negligible if and only if $N$ is open subgroup of $G$. 
\end{remark}
\begin{lemma}\label{le4.2}
If $N$ is an  open subgroup of $G$ then each $f\in C_c(N)$ may be regarded as a function in $C_c(G)$ and $Q: C_c(G) \big|_{C_c(N)}$ is surjective on $B=\{F \in C_c(K \backslash G /H),\ supp \ F \subseteq q(N)\}$. 
\end{lemma}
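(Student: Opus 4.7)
The plan is to prove the statement in two parts: first, that any $f \in C_c(N)$ extends naturally to a function in $C_c(G)$, and second, that the restriction $Q|_{C_c(N)}$ surjects onto $B$.

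For the first part, since $N$ is open in $G$, the support of any $f \in C_c(N)$ is compact in $N$, hence also compact (and closed) in $G$. Extending $f$ by zero on $G \setminus N$ yields a function $\tilde f$ on $G$ that is continuous on each of the two open sets $N$ and $G \setminus \mathrm{supp}(f)$, which together cover $G$; so $\tilde f \in C_c(G)$ with the same compact support as $f$.

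For the second part, I would adapt the construction used in the proof of Lemma \ref{le2.3}(i). Given $F \in B$, its crucial ingredient is a compact set $D \subseteq N$ with $q(D) \supseteq \mathrm{supp}(F)$. Once such a $D$ is at hand, I pick $g \in C_c^+(N)$ with $g > 0$ on $D$, so that $Q(g) > 0$ on $\mathrm{supp}(F)$; then the formula
\[
f(x)=\begin{cases}
g(x)\,F(q(x))/Q(g)(q(x)) & \text{if } Q(g)(q(x))\neq 0, \\
0 & \text{otherwise},
\end{cases}
\]
produces an $f \in C_c(N)$ (since $\mathrm{supp}(f) \subseteq \mathrm{supp}(g) \subseteq N$) with $Q(f)=F$, exactly as in Lemma \ref{le2.3}(i).

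To produce $D$, my plan is to identify $q(N)$ with the double coset space $K \backslash N / (H \cap N)$. Note $K \subseteq N$ (since $K$ normalizes itself) and $H \cap N$ is closed in $N$, so the natural map $K \backslash N /(H \cap N) \to q(N)$ is well defined and bijective: if $n_1, n_2 \in N$ with $Kn_1H = Kn_2H$, then writing $n_1 = k n_2 h$ with $k \in K$, $h \in H$ forces $h = n_2^{-1} k^{-1} n_1 \in N$, hence $h \in H \cap N$. Because $q$ is open and $N$ is open, $q(N)$ is open in $K \backslash G / H$, and the bijection becomes a homeomorphism between the two quotient topologies. The properness of the intrinsic double-coset quotient map $N \to K \backslash N /(H \cap N)$, as recalled in Section \ref{sekh2}, then provides a compact $D \subseteq N$ with $q(D) = \mathrm{supp}(F)$.

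The main obstacle I anticipate is verifying that the bijection $K \backslash N /(H \cap N) \to q(N)$ is actually a homeomorphism, not merely a continuous bijection; this is precisely the point where the openness of $N$ in $G$ is essential, because it is what forces $q(N)$ to inherit the quotient topology (rather than some coarser subspace topology) and thus lets us transfer the properness of the intrinsic quotient over to the restricted map $q|_N$.
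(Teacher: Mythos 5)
Your argument is correct, and it actually supplies details that the paper omits entirely (the paper's proof of Lemma \ref{le4.2} is just ``straightforward''). Both halves check out: the zero-extension of $f\in C_c(N)$ is continuous because $N$ and $G\setminus\mathrm{supp}(f)$ are open sets covering $G$, and your Urysohn-type formula $f = g\cdot (F\circ q)/(Q(g)\circ q)$ does yield $Q(f)=F$ with $\mathrm{supp}(f)\subseteq\mathrm{supp}(g)\subseteq N$, exactly as in Lemma \ref{le2.3}(i). The one step you flag as delicate --- that $K\backslash N/(H\cap N)\to q(N)$ is a homeomorphism --- does go through: continuity follows from the universal property of the quotient map $q_N$, and openness follows because for $U$ open in $K\backslash N/(H\cap N)$ one has $\beta(U)=q\bigl(q_N^{-1}(U)\bigr)$ with $q_N^{-1}(U)$ open in $G$ (here openness of $N$ is used) and $q$ an open map. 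That said, the detour through the intrinsic double coset space of $N$ is heavier than necessary: to produce the compact $D\subseteq N$ with $q(D)\supseteq\mathrm{supp}(F)$ you can simply fix a relatively compact open unit neighbourhood $U_0$ with $\overline{U_0}\subseteq N$, cover the compact set $\mathrm{supp}(F)$ by the open sets $q(nU_0)$, $n\in N$, extract a finite subcover $q(n_1U_0),\dots,q(n_mU_0)$, and take $D=\bigcup_{i=1}^m n_i\overline{U_0}$; this avoids both the identification of $q(N)$ with $K\backslash N/(H\cap N)$ and the appeal to properness. Either way, your proof is sound.
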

\begin{proof}
Proof is straightforward.
\end{proof}
Our main result  in this section is as follows:
\begin{theorem}\label{th4.4}
Suppose also that  $K$ is an $I N$-group, $N$ is not locally negligible, and $H \subseteq N$. Then every  $N$-strongly quasi-invariant measure $\mu$ on $K \backslash G /H$ arises from a rho-function. That is,   there is  a rho-function $\rho:G \to (0,+\infty)$, such that 
\begin{equation}\label{eqkhllll}
\int_{K \backslash G /H} \int_K \int_H f (k^{-1} xh) dh dk d\mu(\ddot{x}) =\int_G f(x) \rho (x) d x\  \text{for all}\  f\in C_c(N),
\end{equation}
and all such measures are $N$-strongly equivalent. That is to say that they have the same negligible  sets on $q(N)$. 
\end{theorem}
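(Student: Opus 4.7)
The plan is to push $\mu$ forward to a Radon measure $\tilde\mu$ on $G$ via Theorem \ref{th2.4}, show that the restriction $\tilde\mu|_N$ is quasi-invariant under left translations of the open subgroup $N$, and then invoke the classical fact that a quasi-invariant Radon measure on a locally compact group is equivalent to Haar measure; the resulting Radon--Nikodym density will be the desired rho-function.

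Form $\tilde\mu$ on $G$ from $\mu$ as in Theorem \ref{th2.4}, so that the invariance formula \eqref{eq2.2} holds. Since $N$ is open in $G$ by Remark \ref{re4.1}, $\tilde\mu|_N$ is a positive Radon measure on $N$. For $n\in N$ and $f\in C_c(N)$ (regarded as an element of $C_c(G)$ by extension by zero), I would combine Lemma \ref{le2.3}(ii), which is available because $K$ is an $IN$-group, with the defining identity $\int F\, d\mu_n=\int F\,\lambda(n,\cdot)\,d\mu$ of $N$-strong quasi-invariance to derive
\[
\int_G f(n^{-1}x)\, d\tilde\mu(x) = \int_{K\backslash G /H} L_n Q(f)(\ddot x)\, d\mu(\ddot x) = \int_G f(y)\,\lambda(n,\ddot y)\, d\tilde\mu(y),
\]
where in the final step $y\mapsto\lambda(n,\ddot y)$ is pulled inside $Q$ because it is constant on each double coset. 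This says that $\tilde\mu|_N$ is left-translation-quasi-invariant on the locally compact group $N$ with continuous positive cocycle.

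Next, invoking the classical theorem that every left-translation-quasi-invariant Radon measure on a locally compact group is mutually absolutely continuous with (left) Haar measure yields a locally integrable $\rho\colon N\to(0,+\infty)$ with $d\tilde\mu|_N=\rho\,dx|_N$. Because $K\subseteq N$ automatically and $H\subseteq N$ by hypothesis, one has $KNH=N$, so every double coset $KxH$ lies either entirely inside $N$ or entirely outside $N$; accordingly, $\rho$ may be extended to a strictly positive function on all of $G$ by prescribing its values on $G\setminus N$ via the continuous positive rho-function of Proposition \ref{le2.6}. The rho-function identity on $N$ follows by specializing \eqref{eq2.2} to $f\in C_c(N)$, performing the left-Haar change of variables, and using unimodularity of $K$ (which holds because $K$ is $IN$); off $N$ it holds by construction. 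Equation \eqref{eqkhllll} is then immediate from the definition of $\tilde\mu$.

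For the $N$-strong equivalence of such measures, a second $N$-strongly quasi-invariant $\mu'$ yields $\tilde\mu'|_N\sim dx|_N\sim\tilde\mu|_N$, and Lemma \ref{le4.2} transports this equivalence back to show that $\mu$ and $\mu'$ have identical negligible sets on $q(N)$. The principal obstacle is justifying the absolute continuity step: showing that every Haar-null set in $N$ is $\tilde\mu|_N$-null is typically proved by a Fubini-type argument that exploits the positivity of the cocycle and the translation-invariance of the Haar-null ideal, and extra care is required because $\lambda(n,\cdot)$ lives on $K\backslash G/H$ rather than on $N$ itself. A secondary delicate point is promoting the rho-function identity from the almost-everywhere level (where it is first obtained on $N$) to hold literally on $G$, which is handled by choosing a pointwise representative of $\rho$ built from $\lambda$ and the fixed base point $\ddot e$.
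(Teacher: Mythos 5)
Your overall strategy is viable and ends up closely parallel to the paper's, but you route through heavier machinery at exactly the one place where the paper is clever, and that detour is precisely where your two acknowledged ``obstacles'' come from. The paper never invokes the general theorem that a quasi-invariant Radon measure on a locally compact group is equivalent to Haar measure. Instead it twists first: using the cocycle identity $\lambda(n_1n_2,p)=\lambda(n_1,n_2\cdot p)\,\lambda(n_2,p)$, the functional $f\mapsto\int_{K\backslash G/H}Q(f)(KnH)\,\lambda(n,KH)^{-1}\,d\mu(KnH)$ on $C_c(N)$ is shown to be left $N$-invariant, so uniqueness of Haar measure on $N$ identifies it with $c\int_N f\,d\omega$, and the density is read off as the explicitly continuous, everywhere-positive function $\rho(n)=c\,\lambda(n,KH)$. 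That single move dissolves both of your difficulties at once: there is no mutual-absolute-continuity theorem to justify and no almost-everywhere representative to upgrade, since $\rho$ arrives already continuous and the rho-function identity is then checked pointwise from \eqref{eq2.2} (with $\Delta_N=\Delta_G|_N$ because $N$ is open). You can graft this twist directly onto your own setup: your identity $\int_G L_nf\,d\tilde\mu=\int_G f(y)\,\lambda\big(n,q(y)\big)\,d\tilde\mu(y)$, combined with $\lambda(ny,\ddot{e})=\lambda\big(n,q(y)\big)\lambda(y,\ddot{e})$ for $y\in N$, shows that $f\mapsto\int_N f(y)\,\lambda(y,\ddot{e})^{-1}\,d\tilde\mu(y)$ is left-invariant, giving $d\tilde\mu|_N=c\,\lambda(\cdot,\ddot{e})\,dx$ with no appeal to the classical equivalence theorem. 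Two smaller points: your extension of $\rho$ off $N$ by the strictly positive rho-function of Proposition \ref{le2.6} is legitimate (since $K,H\subseteq N$ makes $N$ a union of double cosets) and actually matches the statement's claim $\rho>0$ better than the paper, which extends by zero; but your final equivalence step is too thin as stated --- rather than transporting null sets back and forth through $\tilde\mu$ (where the relation between $\mu$-null subsets of $q(N)$ and $\tilde\mu$-null subsets of $N$ itself requires an argument when $K$ or $H$ is noncompact), you should exhibit the continuous positive density $\varphi(KnH)=\rho_1(n)/\rho_2(n)$ on $q(N)$, well defined by the rho-identity, and verify $d\mu_1=\varphi\,d\mu_2$ there using the surjectivity of $Q|_{C_c(N)}$ from Lemma \ref{le4.2}, which is what the paper does.
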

\begin{proof}
Suppose that $\mu$ is an $N$-strongly quasi-invariant measure on $K \backslash G / H $,   then  there is a positive continuous function $\lambda$  on $N \times (K \backslash G / H )$, such that $(d\mu_x / d\mu)(\ddot{y})=\lambda(x,\ddot{y})$.
It is easy to check that
\[
\lambda(n_1n_2,p)=\lambda(n_1,n_2p)\lambda(n_2,p).
\]
According to  Remark \ref{re4.1}, $N$ is an open  subgroup of $G$. Therefore, by applying  Lemma \ref{le4.2}, each function in $C_c(N)$ may be regarded  as a function in $C_c(G)$. Also, Range $  Q\big|_{C_c(N)}$ is $\{F \in C_c(K \backslash G /H); \ supp \ F \subseteq q(N)\}$. The mapping $f \mapsto \int_{K \backslash G /H}Q(f)(KnH) \lambda (n, KH)^{-1}d\mu(KnH)$ is a left invariant positive linear functional on $C_c(N)$. Indeed;
\begin{align*}
&\int_{K \backslash G /H} Q(L_mf) (KnH)\lambda (n,KH)^{-1}d\mu(KnH)\\
&\quad = \int_{K \backslash G /H} L_m Q(f) (KnH) \lambda (n, KH)^{-1}d\mu (KnH) \\
&\quad =\int_{K \backslash G /H} Q(f) (Km^{-1}n H) \lambda (n, KH)^{-1}d\mu(KnH) \\
&\quad =\int_{K \backslash G /H} Q(f) (KnH) \lambda (mn, KH) ^{-1} d\mu_m(KnH)\\
&\quad =\int_{K \backslash G /H} Q(f) (KnH) \lambda (m, KnH)^{-1}\lambda (n, KH)^{-1}\lambda (m, KnH)d\mu (KnH) \\
&\quad =\int_{K \backslash G /H} Q(f) (KnH) \lambda (n, KH)^{-1} d\mu(KnH).  
\end{align*}
By uniqueness of Haar measure on $N$,  there is  $c>0$ such that 
\begin{equation}\label{eqkh222}
\int_{K \backslash G /H} Q(f)(KnH) \lambda (n, KH)^{-1} d\mu (Kn H) =c \int_N f(x) d \w(x). 
\end{equation}
Let $\rho_1:N \to (0,+\infty)$  be given by $\rho_1(n)=c\lambda(n, KH)$. By replacing $f$ by $f\cdot \lambda (n, KH)$ in \eqref{eqkh222}, we see that
\begin{align}
\int_{K \backslash G /H}\int_K\int_H f(k^{-1}nh)dhdk
&=\int_N f(n) \rho_1(n) dn\label{eq2.*}
\end{align}
Now $\rho_1$ can be  extended  on $G$
 by the following definition:
 \begin{align*}
 & \rho :G \to (0,+\infty)\\
&
\rho(x)=\begin{cases}
\rho_1(x) & x \in N \\
0& x \not\in N 
\end{cases},
\end{align*}
then $\rho$ is  a positive continuous function    on $G$. Moreover, if $h_0\in H$ and $k_0 \in K$, then by using $H \subseteq N$, we can write
\begin{align*}
\int_G f(x) \rho (k_0x h_0)dx &= \int_N f(x) \rho(k_0x h_0)d\w(x) \\
&=\int_G f(k_0^{-1}x h_0^{-1})  \rho (x)  \Delta _N(h_0^{-1})d\w(x) \\
&=\int_{K \backslash G /H}\int_H\int_K f(k_0^{-1}k^{-1} x hh_0^{-1} ) dk dh d\mu(\ddot{x}) \\
&= \Delta_G(h_0^{-1})\int_{K \backslash G /H}\int_K\int_H f (k^{-1}x h)\Delta_H (h_0)\Delta_K(k_0) dh dk d\mu(\ddot{x})\\
&=\Delta_G(h_0^{-1})\Delta_H(h_0) \Delta_K(k_0)\int_G f (x)\rho (x) d\w(x) \\
&=\frac{\Delta_K(k_0)\Delta_H(h_0)}{\Delta_G(h_0)}\int _N f(x)\rho(x) d\omega(x).
\end{align*}
This being for all $f\in C_c(N)$,  $\rho(k_0 n h_0)=\frac{\Delta_H(h_0)\Delta_K(k_0)}{\Delta_G(h_0)}\rho(n)$. 
When $x \not\in N$, the equality $\rho(k_0 x h_0)=\frac{\Delta_H(h_0)\Delta_K(k_0)}{\Delta_G (h_0)}\rho(x)$ is trivial. This proves that $\rho$ is a rho-function.\\
Suppose that $\mu_1$ and $ \mu_2$ are $N$-strongly quasi-invariant measures on $K \backslash G /H$  associated with rho-function $\rho_1$ and $\rho_2$ on $G$,  respectively. Then, we have 
\[
\frac{\rho_1(knh)}{\rho_2(knh)}=\frac{\frac{\Delta_H(h)\Delta_K(k)}{\Delta_G(h)}\cdot \rho_1(n)}{\frac{\Delta_H(h)\Delta_K(k)}{\Delta_G(h)} \cdot \rho_2(n)}=\frac{\rho_1(n)}{\rho_2(n)}
\]
for all $n \in N$. \\
Take $\varphi: K \backslash G /H \to [0, +\infty)$ by 
\[
\varphi(KxH)=\begin{cases}
\frac{\rho_1(x)}{\rho_2(x)} & \text{if}\ x \in N \\
0 & \text{if} \ x \not\in N
\end{cases},
\]
clearly
$\varphi$ is well-defined and continuous. \\
Let $f\in C_c(G)$. Then we can write
\begin{align*}
Q\big(f \cdot \frac{\rho_1}{\rho_2}\big) (KnH)&=\int_K\int_H f(k^{-1}n h) \frac{\rho_1(k^{-1}nh)}{\rho_2(k^{-1}nh)}dh dk \\
&= \frac{\rho_1(n)}{\rho_2(n)} \int_K \int_H f(k^{-1} n h) dh dk, 
\end{align*}
for all $n \in N$. \\
Therefore,  we have
\begin{align*}
\int_{K \backslash G /H} Q(f) (\ddot{n}) d\mu_1 (\ddot{n})&= \int_N f(n) \rho_1 (n)dn\\
&=\int_N f(n) \big(\frac{\rho_1(n)}{\rho_2(n)}\big) \rho_2 (n) dn \\
&=\int_{K \backslash G /H} Q(f) (\ddot{n})\varphi(\ddot{n})d\mu_2(\ddot{n}).
\end{align*}
Hence, $\frac{d\mu_1}{d\mu_2}(\ddot{n})=\varphi(\ddot{n})$ for all $\ddot{n} \in K \backslash G /H$.\\
Now, if $A\subseteq q(N)$ is a negligible set with respect to $\mu_1$, then we have 
\[
0=\int_{K \backslash G /H} 1_A (\ddot{x}) d\mu_1(\ddot{x})=\int_{K \backslash G /H} 1_A (\ddot{x}) \varphi(\ddot{x}) d\mu_2 (\ddot{x}).
\]
Therefore, $\int_{K \backslash G /H} 1_A(\ddot{x})\varphi(\ddot{x}) d\mu_2(\ddot{x})=0$. By using the fact that for each $n \in N$ we can get $\varphi(\ddot{n})>0$ we have $\int_{K \backslash G /H}1_A (\ddot{x}) d\mu_2(\ddot{x})=0$, so 
$
\mu_2(A)=0
$.\\
Therefore, the negligible sets of $q(N)$ with respect to $\mu_1$ are the same as the negligible sets,  with respect to $\mu_2$, and we are done.
\end{proof}
\begin{corollary}
Let   $G$ be a semidirect product of $K$ and $ H$, respectively.  Double coset space $K \backslash G /H$ possesses a strongly quasi-invariant measure.
\end{corollary}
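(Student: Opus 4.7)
The plan is to deduce the corollary by combining Proposition \ref{le2.6} with Theorem \ref{thkh3.9} and the Remark that follows it. The key structural observation is that in the semidirect product $G = K \rtimes H$, the subgroup $K$ is normal in $G$, so its normalizer $N$ coincides with $G$. Consequently, every $N$-strongly quasi-invariant measure on $K \backslash G / H$ is automatically $G$-strongly quasi-invariant, which is what ``strongly quasi-invariant'' means in the classical sense.

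First I would invoke Proposition \ref{le2.6} to obtain a continuous, everywhere strictly positive rho-function $\rho : G \to (0,+\infty)$ for the triple $(K,G,H)$; this step requires no additional hypotheses beyond those already present. Second, I would feed $\rho$ into Theorem \ref{thkh3.9} to manufacture an $N$-strongly quasi-invariant Radon measure $\mu_\rho$ on $K \backslash G / H$, whose modular function is given by \eqref{eqkh2.4}. Finally, the Remark immediately after Theorem \ref{thkh3.9} upgrades $N$-strong quasi-invariance to $G$-strong quasi-invariance, since $K$ is normal in $G$. This yields the desired strongly quasi-invariant measure on $K \backslash G / H$.

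The main obstacle is verifying the hypothesis of Theorem \ref{thkh3.9} that $K$ be an $IN$-group, a condition that is not written into the corollary itself. The other hypotheses are automatic: $H \subseteq G = N$, and $N = G$ is obviously open, hence not locally negligible. So the one nontrivial point is the $IN$ assumption on $K$, which I would either read as a tacit standing hypothesis or sidestep by exploiting normality: when $K$ is normal the map $k \mapsto nkn^{-1}$ is an automorphism of $K$, and its effect on $dk$ is a character $\delta_K : G \to (0,+\infty)$. One can then absorb $\delta_K(n)$ into the candidate modular function $\lambda(n,\ddot y) = \rho(ny)/\rho(y)$ and reproduce the computation at the end of the proof of Theorem \ref{thkh3.9}, recovering $N$-strong quasi-invariance (and hence $G$-strong quasi-invariance) without separately assuming $K$ is $IN$. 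This absorption is the only nonroutine calculation; once done, the conclusion follows formally from the cited results.
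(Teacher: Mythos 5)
Your proposal is correct and follows the route the paper evidently intends: since $K$ is normal in $G=K\rtimes H$ we have $N=G$, so Proposition \ref{le2.6} plus Theorem \ref{thkh3.9} (and the remark following it) produce a $G$-strongly quasi-invariant measure; the paper itself prints no proof of this corollary. Your flagging of the $IN$-hypothesis on $K$ is a genuine issue with the corollary as stated, since Theorem \ref{thkh3.9} (via Lemmas \ref{le2.2} and \ref{le2.3}(ii)) does assume it and a general $K$ in a semidirect product need not be $IN$. Your proposed repair is sound: for normal $K$, conjugation $k\mapsto gkg^{-1}$ is an automorphism of $K$, so $\Delta_K\circ\mathrm{conj}_g=\Delta_K$ (which keeps $\lambda(g,\ddot y)=\rho(gy)/\rho(y)$ well defined on double cosets) and its modulus is a continuous positive character $\delta$ of $G$ satisfying $Q(L_g f)=\delta(g)\,L_gQ(f)$, so replacing $\lambda(g,\ddot y)$ by $\delta(g)\,\rho(gy)/\rho(y)$ makes the computation at the end of the proof of Theorem \ref{thkh3.9} go through without the $IN$ assumption. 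It would strengthen the write-up to carry out that one displayed computation explicitly rather than leave it as a remark, but no step of the argument fails.
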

\begin{corollary}\label{co4.6}
If $K \vartriangleleft G$,  then each strongly quasi-invariant measure on $K \backslash G /H$ arises from a rho-function. In other words, there exists a rho-function $\rho$ on $G$ such that 
\[
\int_{K \backslash G /H}\int_K\int_H f(k^{-1} x h) dh dk d\mu(\ddot{x}) =\int_G f(x) \rho(x) dx\quad \text{for all} \ f\in C_c(G)
\] 
\end{corollary}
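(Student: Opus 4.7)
The plan is to obtain this corollary as a direct specialization of Theorem \ref{th4.4}. The hypothesis $K \vartriangleleft G$ means that $gKg^{-1} = K$ for every $g \in G$, so the normalizer $N$ of $K$ in $G$ equals $G$. In particular, every strongly quasi-invariant measure on $K \backslash G / H$ in the usual sense is tautologically $N$-strongly quasi-invariant (with the same continuous cocycle $\lambda$), and conversely. Thus the statement to be proved is exactly the conclusion of Theorem \ref{th4.4} applied to the triple $(K,G,H)$ with $N = G$.

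Next I would verify that the hypotheses of Theorem \ref{th4.4} are in force. Since $N = G$, the normalizer is trivially an open subgroup of $G$, hence by Remark \ref{re4.1} it is not locally negligible. The inclusion $H \subseteq N$ is also immediate because $N = G \supseteq H$. The remaining hypothesis, that $K$ be an $IN$-group, is the standing structural assumption on $K$ carried over from Section \ref{sekh3} and Theorem \ref{th4.4}; since the corollary explicitly builds on that machinery, this condition is assumed available.

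With all hypotheses confirmed, Theorem \ref{th4.4} produces a rho-function $\rho \colon G \to (0,+\infty)$ such that equation \eqref{eqkhllll} holds for every $f \in C_c(N)$. Because $N = G$, one has $C_c(N) = C_c(G)$, and equation \eqref{eqkhllll} becomes exactly
\[
\int_{K \backslash G / H}\!\!\int_K\!\int_H f(k^{-1}xh)\,dh\,dk\,d\mu(\ddot{x}) = \int_G f(x)\rho(x)\,dx \quad \text{for all } f \in C_c(G),
\]
which is the desired identity.

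The only genuine point requiring care is the translation between the two notions of quasi-invariance when $N = G$, i.e.\ confirming that ordinary strong quasi-invariance under the $G$-action on $K \backslash G / H$ coincides with $N$-strong quasi-invariance in the sense of Definition 3.1; this is routine because the continuous Radon–Nikodym cocycle $\lambda(n,\ddot{y})$ from the $N$-quasi-invariance definition is precisely the density function appearing in the ordinary definition once $N$ is taken to be all of $G$. I do not foresee any other obstacle beyond verifying this identification.
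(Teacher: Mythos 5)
Your proposal is correct and matches the paper's own (one-line) proof: the corollary is obtained by applying Theorem \ref{th4.4} with $N=G$, which is exactly the specialization you carry out, including the observations that $N=G$ is open (hence not locally negligible), that $H\subseteq N$ is automatic, and that $C_c(N)=C_c(G)$. Your explicit verification of the hypotheses and of the identification of the two notions of quasi-invariance is more careful than the paper's terse justification but does not deviate from it.
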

\begin{proof}
It is sufficient to apply Theorem \ref{th4.4} and to note the fact that $N=G$
\end{proof}
\begin{proposition}\label{pr4.8}
If $\mu$ is an  $N$-strongly quasi-invariant measure on $K \backslash G /H$ which arises from a rho-function, then $supp\ \mu=K \backslash G /H$.
\end{proposition}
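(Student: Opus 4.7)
The plan is to prove this by showing that the complement of $\text{supp}\,\mu$ must be $N$-invariant, and then invoking the transitivity of the $N$-action on $K\backslash G/H$. Set $U := K\backslash G/H \setminus \text{supp}\,\mu$; this is the largest open subset of $K\backslash G/H$ of $\mu$-measure zero. I would argue by contradiction, assuming $U \neq \varnothing$, and show first that $U$ is $N$-invariant, second that this forces $U = K\backslash G/H$, and finally that this contradicts $\mu \neq 0$.

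For the invariance step, I will use $N$-strong quasi-invariance in the form $d\mu_n = \lambda(n,\cdot)\,d\mu$ with $\lambda$ strictly positive and continuous on $N \times K\backslash G/H$. From this,
\[
\mu(n\cdot E) = \mu_n(E) = \int_E \lambda(n,\ddot{y})\,d\mu(\ddot{y}),
\]
so $\mu(E) = 0$ if and only if $\mu(n\cdot E) = 0$ for every $n \in N$. Since $\ddot{y} \mapsto n\cdot \ddot{y}$ is a homeomorphism, $n\cdot U$ is open, and from $\mu(U)=0$ we deduce $\mu(n\cdot U)=0$; by maximality of $U$ this gives $n\cdot U \subseteq U$, and applying the same reasoning to $n^{-1}$ yields $n\cdot U = U$. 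Thus $U$ is $N$-invariant. Since the $N$-action on $K\backslash G/H$ is transitive (established in Section~\ref{sekh2}), the only $N$-invariant subsets are $\varnothing$ and $K\backslash G/H$; as we assumed $U \neq \varnothing$, we must have $U = K\backslash G/H$, i.e.\ $\mu \equiv 0$.

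It then remains to contradict $\mu = 0$ using the hypothesis that $\mu$ arises from a rho-function $\rho$. By definition this means
\[
\int_{K\backslash G/H} Q(f)(\ddot{x})\,d\mu(\ddot{x}) = \int_G f(x)\rho(x)\,dx
\]
for all $f$ in an appropriate space (either $C_c(G)$ as in Theorem~\ref{thkh3.9}, or $C_c(N)$ as in Theorem~\ref{th4.4}). If $\mu = 0$, the right-hand integral vanishes for every such $f$, forcing the locally integrable function $\rho$ to vanish almost everywhere on the relevant domain; as $\rho$ is a (non-trivial) rho-function with continuous, strictly positive modular cocycle $\lambda(n,\ddot{y}) = \rho(ny)/\rho(y)$, this is impossible. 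Hence $\mu \neq 0$, closing the argument.

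The step I expect to be the main obstacle is verifying cleanly that $\mu$ is non-trivial from the rho-function hypothesis, because nothing in the statement of Proposition~\ref{pr4.8} explicitly forbids the degenerate $\rho \equiv 0$; one has to observe that an honest $N$-strongly quasi-invariant measure forces $\rho$ to be strictly positive (at least on $N$) via the defining formula for $\lambda$, and hence $\int f\rho\,dx$ does not vanish identically on $C_c$. Once this non-triviality is secured, the invariance/transitivity argument is the natural and short route.
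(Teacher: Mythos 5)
Your proof is correct within the paper's framework, but it takes a genuinely different route from the paper's. You argue dynamically: the complement $U$ of $\operatorname{supp}\mu$ is the maximal open null set, $N$-strong quasi-invariance with $\lambda>0$ makes $U$ invariant under the $N$-action, and transitivity of that action (asserted in Section~\ref{sekh2}) forces $U=\varnothing$ or $U=K\backslash G/H$, the latter case being excluded by $\mu\neq 0$. The paper instead argues locally and needs neither transitivity nor a separate non-triviality step: given a nonempty open null set $U$, Urysohn's lemma yields a nonzero $F\in C_c^{+}(K\backslash G/H)$ supported in $U$, surjectivity of $Q$ (Lemma~\ref{le2.3}(i)) lifts it to a nonzero $f\in C_c^{+}(G)$ with $Q(f)=F$, and then
\[
0=\int_{K\backslash G/H}F(\ddot{x})\,d\mu(\ddot{x})=\int_G f(x)\rho(x)\,dx>0,
\]
an immediate contradiction since the rho-functions of the paper are strictly positive. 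The trade-off is worth noting: your argument leans on the transitivity of the $N$-action on $K\backslash G/H$, a strong global claim of Section~\ref{sekh2} which the paper's own proof never uses (and which is the one point where your route would collapse if that claim were weakened), whereas the paper's proof leans only on the strict positivity of $\rho$. Moreover, the step you single out as the main obstacle --- ruling out $\mu\equiv 0$ --- simply does not arise in the paper's version, because the contradiction is reached directly from any single nonempty open null set. Your invariance-plus-transitivity scheme is the classical one for homogeneous spaces $G/H$ and it does work here, but the pullback argument via $Q$ is shorter and assumes less.
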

\begin{proof}
Suppose that $\mu$ is  an $N$-strongly quasi-invariant measure on $K \backslash G /H$ which arises from  a rho-function $\rho$. Therefore, we can write 
\[
\int_{K \backslash G /H}\int_K \int_H f(k^{-1}xh) dhdkd\mu(\ddot{x})=\int_G f(x) \rho(x) dx \quad \text{for all}\ f\in C_c(G).
\]
Now if $supp\ \mu\neq K \backslash G /H$, then there is a non-empty open subset $U$ of $K \backslash G /H$ such that $\mu(U)=0$. By applying Urysohn's Lemma,  there is a non-zero $F \in C_c(K \backslash G /H)$ such that $supp \ F \subseteq U$. Also, there is  a non-zero $f\in C_c(G)$ such that $Q(f)=F$.  So,
\begin{align*}
0&=\int_{K \backslash G /H} F(\ddot{x})d\mu(\ddot{x}) =\int_{K \backslash G /H} Q(f)(\ddot{x})d\mu(\ddot{x}) \\
&=\int_{K \backslash G /H} \int_K \int_H f(k^{-1} x h) dh dk d\mu(\ddot{x})\\
&=\int_G f(x) \rho(x) dx>0
\end{align*}
which is a contradiction. Therefore, $supp\ \mu =K \backslash G /H$. 
 \end{proof}
\begin{proposition}
Let $N$ be open in $G$, and $\mu$ be any $N$-strongly quasi-invariant measure on $K \backslash G /H$ which arises from  a rho-function. Then, for a Borel subset $A\subseteq K \backslash G /H$, $A\cap q(N)$  is locally negligible  if and only if $q^{-1}(A) \cap N$ is locally negligible in $G$. 
\end{proposition}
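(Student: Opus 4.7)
The strategy is to exploit the fundamental identity
\[
\int_G f(x)\,\rho(x)\,dx \;=\; \int_{K\backslash G/H} Q(f)(\ddot{x})\,d\mu(\ddot{x}) \qquad (f\in C_c(G)),
\]
extended by a standard monotone-class argument to arbitrary non-negative Borel $f$, with carefully chosen test functions of the form $f = 1_{q^{-1}(A)\cap N}\cdot g$ where $g\in C_c^+(G)$. The key preliminary observation I would establish is that, because $K,H\subseteq N$ (the standing hypothesis under which Theorem \ref{th4.4} produces rho-functions), both $q^{-1}(A)$ and $N$ are saturated under the $K$-$H$ double-coset equivalence, so $1_{q^{-1}(A)\cap N}(x)$ depends only on $\ddot{x}$ and factors out of $Q$:
\[
Q\bigl(1_{q^{-1}(A)\cap N}\cdot g\bigr)(\ddot{x}) \;=\; 1_{A\cap q(N)}(\ddot{x})\,Q(g)(\ddot{x}).
\]
I would also record that the rho-function attached to $\mu$ is strictly positive on the open set $N$ — from the construction in the proof of Theorem \ref{th4.4}, $\rho|_N = c\,\lambda(\cdot,KH)$ with $\lambda$ continuous and strictly positive — so Haar measure on $N$ and $\rho\,dx$ share the same negligible sets there.

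For the forward implication, assuming $A\cap q(N)$ is locally $\mu$-negligible, I would fix a compact $E\subseteq N$, choose by Urysohn a $g\in C_c^+(G)$ with $g\equiv 1$ on $E$, and substitute $f=1_{q^{-1}(A)\cap N}\cdot g$ into the identity. The right-hand side becomes $\int_{A\cap q(N)} Q(g)\,d\mu$, which vanishes since $Q(g)$ has compact support and $A\cap q(N)$ meets every compact set in a $\mu$-null set. Positivity of $\rho$ on $N$ then forces the Haar measure of $q^{-1}(A)\cap N\cap E$ to be zero; passing from compact $E\subseteq N$ to arbitrary compact $M\subseteq G$ via inner regularity of Haar measure yields local Haar-negligibility in $G$.

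For the converse, assuming $q^{-1}(A)\cap N$ is locally Haar-negligible, I would fix a compact $F\subseteq K\backslash G/H$ and invoke Lemma \ref{lekh2.3} to obtain $g\in C_c^+(G)$ with $Q(g)\equiv 1$ on $F$. With the same $f$, the left-hand side of the identity vanishes because $g$ has compact support and $q^{-1}(A)\cap N$ is locally null, so $\int Q(f)\,d\mu=0$. Combining this with the factorisation above and $Q(g)\ge 1_F$ gives
\[
\mu(A\cap q(N)\cap F) \;\le\; \int 1_{A\cap q(N)}\,Q(g)\,d\mu \;=\; \int Q(f)\,d\mu \;=\; 0,
\]
completing the proof.

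The principal subtlety I anticipate is structural rather than computational: justifying the extension of the fundamental identity to the Borel function $f=1_{q^{-1}(A)\cap N}\cdot g$ (a monotone-class argument using that $Q$ preserves monotone limits) and, above all, verifying the factorisation $Q(1_{q^{-1}(A)\cap N}\cdot g)=1_{A\cap q(N)}\cdot Q(g)$, which relies critically on $K,H\subseteq N$ so that the double-coset saturation argument goes through. Once that identity is in hand, the remaining steps — the inner regularity argument in the forward direction and the appeal to Lemma \ref{lekh2.3} in the reverse — are routine.
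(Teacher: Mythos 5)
Your argument is correct in outline, but it takes a genuinely different route from the paper's. The paper never tests the identity $\int_G f\rho\,dx=\int Q(f)\,d\mu$ against Borel integrands; instead it integrates $1_{A\cap q(N)}(x\cdot\ddot y)$ against Haar measure $\omega$ on $N$ and against $\mu$, applies Fubini to $\int_{K\backslash G/H}\int_N f(x)1_{A\cap q(N)}(x\cdot\ddot y)\,d\omega(x)\,d\mu(\ddot y)$, and uses two facts: $N$-strong quasi-invariance, so that $\mu(A\cap q(N))=0$ forces $\mu\big(x^{-1}\cdot(A\cap q(N))\big)=0$ for all $x\in N$, and full support of $\mu$ (Proposition \ref{pr4.8}, which is where ``arises from a rho-function'' actually enters their proof) in order to select a good translate $y\in U\cap N$ out of an almost-everywhere statement. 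Your route replaces all of this by the factorisation $Q(1_{q^{-1}(A)\cap N}\cdot g)=1_{A\cap q(N)}\cdot Q(g)$ together with positivity of $\rho$ on $N$; this is more direct and avoids both the Fubini interchange and the a.e.-selection step. Three points deserve explicit attention. First, your factorisation genuinely requires $KNH=N$, i.e.\ $H\subseteq N$; the paper's proof of this particular proposition does not need this, because it only evaluates $1_{A\cap q(N)}$ at points $q(x)$ with $x$ already in $N$, where $1_{A\cap q(N)}(q(x))=1_{q^{-1}(A)}(x)$ holds unconditionally. You are right that $H\subseteq N$ is the hypothesis under which Theorem \ref{th4.4} produces rho-functions, so invoking it is defensible, but it should be stated as a hypothesis. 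Second, the extension of the identity to $f=1_S\cdot g$ is most cleanly obtained not by a bare monotone-class argument but by noting that $B\mapsto\int_{q^{-1}(B)}g\rho\,dx$ and $B\mapsto\int_B Q(g)\,d\mu$ are finite Radon measures on $K\backslash G/H$ which agree on $C_c(K\backslash G/H)$ (using $Q\big((F\circ q)\,g\big)=F\cdot Q(g)$), hence coincide; evaluating at the Borel set $A\cap q(N)$ gives exactly what you need. Third, strict positivity of $\rho$ on $N$ is not part of Definition \ref{de2.1} (rho-functions are merely non-negative); you should derive it, as you indicate, by comparing the given $\rho$ with the everywhere-positive function $c\,\lambda(\cdot,KH)$ constructed in the proof of Theorem \ref{th4.4}, which agrees with $\rho$ almost everywhere on $N$. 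With these points made explicit, your proof is complete and is, if anything, tidier than the published one.
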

\begin{proof}
Let $A\subseteq K \backslash G /H$ be  a Borel set such that $A \cap q(N)$ be locally negligible. By intersecting $A\cap q(N)$ with an arbitrary compact subset of $K \backslash G /H$, we may assume, without loss of generality, that $A\cap q(N)$ is relatively compact, that is, $\overline{A\cap q(N)}$ is compact. \\
Let $f \in C^+ (G)$ be such that $f \neq 0$. By applying  Fubini's Theorem, we can write 
\begin{equation}
\int_{K \backslash G /H}\int_N f(x) \cdot 1_{A\cap q(N)} (x \cdot \ddot{y}) d\omega(x) d\mu (\ddot{y})=\int_N \int_{K \backslash G /H}f(x) \cdot 1_{A\cap q(N)} (x \cdot \ddot{y})d\mu(\ddot{y})d\omega(x).\label{eqkhsh}
\end{equation}
Suppose that $\mu\big(A\cap q(N)\big)=0$, then $\mu\big(x^{-1}\cdot A\cap q(N)\big)=0$ for all $x \in N$. Thus, the right hand side of \eqref{eqkhsh} is zero,  and so the left hand side. Therefore,  we may state that 
\[
\int _N f(x) 1_{A\cap q(N)} (x \cdot \ddot{y}) d\omega(x)=0 
\]
for almost all $\ddot{y}\in K \backslash G /H$. \\
Let $C$ be any compact subset of $G$ such that $C \cap N\neq \varnothing$ and $U$ a compact unit  neighbourhood in $G$. Select $f \in C_c^+(G)$ so that $f(x) > 1$ for all $x \in CU^{-1}\cap N$. Since $\mu\big(q(U\cap N)\big)>0$, there exists $y\in U\cap N$ such that 
\[
\int_N f(x) \cdot 1_{A\cap q(N)} (x \cdot \ddot{y})d\omega(x)=0.
\]
So,
\begin{align*}
0&=\Delta_N (y) \int_N f(x)\cdot 1_{A\cap q(N)} \big(q(xy)\big) d\omega(x)\\
&=\int_N f(xy^{-1}) 1_{q^{-1}(A)\cap N} (x) d\omega(x) .
\end{align*}
Now, for each $x\in CU^{-1}\cap N$, we have $f(xy^{-1})\geq 1$ which implies that
\[\int_N 1_{q^{-1}(A)\cap N\cap C} (x)d\omega(x)=0.\]
Thus, $q^{-1}(A)\cap N \cap C$ is negligible set for any compact set $C \subseteq G$, that is, $q^{-1}(A)\cap N$ is locally negligible. \\
Conversely, suppose that $q^{-1}(A)\cap N$ is locally negligible. Again, let $f\in C_c^+(N)$, and $\ddot{y}\in q(N)$  be arbitrary and from now fixed, since $q$ is onto, choose $y\in N$ such that $q(y)=\ddot{y}$. Then,  $x \mapsto f(xy^{-1})$ is continuous with compact support. So, 
\begin{align*}
0&=\int_N f(xy^{-1}) \Delta_N (y^{-1}) 1_{q^{-1}(A)\cap N} (x) d\omega(x) \\
&=\int_N f(x) 1_{q^{-1}(A)\cap N} (xy) d\omega(x) \\
&=\int_N f(x) 1_{A \cap q(N)} \big(x \cdot q(y)\big)d\omega(x).
\end{align*}
Then, the left hand side of \eqref{eqkhsh} is zero, therefore the right hand side is zero as well. Hence, for almost all $x \in N$
\begin{align*}
0&=\int_{K \backslash G /H} f(x)\cdot 1_{A\cap q(N)} (x\cdot \ddot{y}) d\mu(\ddot{y})\\
&=f(x)\cdot \mu\big(x^{-1}\cdot A\cap q(N)\big).
\end{align*} 
Since $f\neq 0$, there is $x \in N$ so that $\mu \big(x^{-1} \cdot A\cap q(N)\big)=0$ which implies that $\mu\big(A\cap q(N)\big)=0$. 
\end{proof}
\begin{theorem}\label{th4.5}
If $K$ is also an $I N$-group and $\mu$ is an  $N$-strongly quasi-invariant measure on $K \backslash G /H$, then $\tilde{\mu}$ defined by $\tilde{\mu}(f)=\int_{K \backslash G /H}Q(f) (\ddot{x}d\mu(\ddot{x})$ has the following property:
\begin{equation}
\int_G f(n x h^{-1}) d\tilde{\mu}(x) =\Delta_H(h) \int_G f(x) \cdot \lambda \big(n, q(x)\big) d\tilde{\mu}(x). \label{eq4.1}
\end{equation}
\end{theorem}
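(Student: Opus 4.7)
The plan is to start from the left-hand side of \eqref{eq4.1}, convert the $\tilde{\mu}$-integral into a $\mu$-integral by the defining formula $\int_G g\,d\tilde{\mu} = \int_{K\backslash G/H} Q(g)\,d\mu$, rework the inner $K\times H$ double integral using the $IN$-group hypothesis together with the modular function of $H$, and finally invoke the $N$-strongly quasi-invariance of $\mu$ to produce the cocycle $\lambda(n,q(x))$ on the right-hand side.

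Concretely, set $\tilde{f}(x) := f(nxh^{-1})$. By the definition of $\tilde{\mu}$,
\[
\int_G f(nxh^{-1})\,d\tilde{\mu}(x) \;=\; \int_{K\backslash G/H} Q(\tilde{f})(\ddot{y})\,d\mu(\ddot{y}),
\]
and unfolding $Q$ gives
\[
Q(\tilde{f})(\ddot{y}) \;=\; \int_K\int_H f(n k^{-1} y h' h^{-1})\,dh'\,dk.
\]
I would now attack the two integrals separately. In the $dk$-integral, the same manipulation that appears in the proof of Lemma~\ref{le2.3}(ii) applies: writing $nk^{-1} = (nk^{-1}n^{-1})n$ and applying Lemma~\ref{le2.2} together with the unimodularity of the $IN$-group $K$, one replaces $nk^{-1}$ by $k^{-1}n$ under the integral. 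In the $dh'$-integral, the right-translation rule $\int_H \psi(h'h^{-1})\,dh' = \Delta_H(h)\int_H \psi(h')\,dh'$ for left Haar measure on $H$ produces the factor $\Delta_H(h)$. The net effect is
\[
Q(\tilde{f})(\ddot{y}) \;=\; \Delta_H(h)\int_K\int_H f(k^{-1} n y h')\,dh'\,dk \;=\; \Delta_H(h)\, Q(f)(n\cdot\ddot{y}).
\]

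Substituting this back,
\[
\int_G f(nxh^{-1})\,d\tilde{\mu}(x) \;=\; \Delta_H(h)\int_{K\backslash G/H} Q(f)(n\cdot\ddot{y})\,d\mu(\ddot{y}).
\]
Invoking the $N$-strongly quasi-invariance of $\mu$ exactly as in the proof of Theorem~\ref{thkh3.9}, namely $\int F(n\cdot\ddot{y})\,d\mu(\ddot{y}) = \int F(\ddot{y})\,\lambda(n,\ddot{y})\,d\mu(\ddot{y})$ for $F \in C_c(K\backslash G/H)$, and noting that $\lambda(n,q(k^{-1}yh')) = \lambda(n,\ddot{y})$, so that $\lambda(n,q(\cdot))$ may be pulled inside $Q$, I arrive at
\[
\Delta_H(h)\int_{K\backslash G/H} Q(f)(\ddot{y})\,\lambda(n,\ddot{y})\,d\mu(\ddot{y}) \;=\; \Delta_H(h)\int_G f(x)\,\lambda(n,q(x))\,d\tilde{\mu}(x),
\]
which is precisely \eqref{eq4.1}.

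I do not anticipate any substantive obstacle: every step is either bookkeeping with modular functions or a direct appeal to earlier results. The only mildly delicate point is the $IN$-group swap $nk^{-1}\leftrightarrow k^{-1}n$ under the $dk$-integral, but this is already executed in Lemma~\ref{le2.3}(ii) and can be quoted verbatim.
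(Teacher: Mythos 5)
Your proposal is correct and follows essentially the same route as the paper: the paper writes $f(nxh^{-1})=L_{n^{-1}}\circ R_{h^{-1}}f(x)$, extracts $\Delta_H(h)$ via property \eqref{eq2.2} of Theorem \ref{th2.4}, commutes $L_{n^{-1}}$ past $Q$ using Lemma \ref{le2.3}(ii), and then applies the quasi-invariance of $\mu$ — exactly the three ingredients you use, merely inlined at the level of the $K\times H$ double integral rather than quoted as black boxes. The one point worth noting is that your identity $\int F(n\cdot\ddot{y})\,d\mu(\ddot{y})=\int F(\ddot{y})\lambda(n,\ddot{y})\,d\mu(\ddot{y})$ carries the same $n$ versus $n^{-1}$ convention ambiguity as the paper's own final step, so you match the stated formula \eqref{eq4.1} precisely as the authors do.
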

\begin{proof}
Suppose that $\mu$ is an  $N$-strongly quasi-invariant measure. Therefore, there is the continuous positive function $\lambda$ on  $N \times K \backslash G /H$  such that $d\mu_n(\ddot{x})=\lambda(n, \ddot{x})d\mu(\ddot{x})$ for all $n \in N$. 
Hence, by applying Theorem \ref{th2.4}, we have
\begin{align*}
\int_G f(nx h^{-1})d\tilde{\mu}(x)&= \int_G L_{n^{-1}} \circ R_{h^{-1}} f(x) d\tilde{\mu}(x) \\
&=\Delta_H(h) \int_G L_{n^{-1}}f(x) d\tilde{\mu} (x) \\
&= \Delta_H (h) \int_{K \backslash G /H} L_{n^{-1}} Q(f) (\ddot{x} )d\mu(\ddot{x}) \\
&=\Delta_H (h) \int_{K \backslash G /H} Q(f) (\ddot{x}) \lambda (n, \ddot{x})d\mu(\ddot{x})\\
&=\Delta_H(h)\int_G \Big(f \cdot \lambda \big(n, q(\cdot)\big)\Big)(x)d\tilde{\mu}(x) 
\end{align*} 
\end{proof}
\begin{remark}\label{re4.7}
Note that if $K=\{e\}$, then we conclude that each strongly quasi-invariant measure on $G/H$ arises from a rho-function  and if $H=\{e\}$, then  $K \backslash G$ (the right cosets of $K$ in $G$) has $N$-strongly quasi-invariant measure by the left action and if $N$ is not locally negligible, this measure arises from  a rho-function.  
\end{remark}
\begin{remark}
Take $K=H$. Now if $N$ is not locally negligible,  each $N$-strongly quasi-invariant measure on $G//H$ arises from a rho-function. 
\end{remark}

\bibliographystyle{amsplain}

\begin{thebibliography}{99}
%
%
%

\bibitem{khf8}
N. Bourbaki, Elements of Mathematics Integration (I), (II), 2004. 



\bibitem{khkh8}
F. Bruhat, Sur les representations induites  des groups de Lie, Bull. Soc.  Math. France 81(1956), 97-205. 




\bibitem{kh7}
C. H. Chu and A. T. M. Lau, Jordan structures in Harmonic functions and Fourier algebras on homogeneous spaces, Math. Ann, 336(2006), no. 4, 803-840.


\bibitem{1}
A. Deitmar, Seigfried Echterhoff, Principles of Harmonic Analysis, Springer Science Business, LLC. 2009. 



\bibitem{khkh90}
F. Esmaeelzadeh, R. A. Kamyabi-Gol, Homogeneous spaces and square-integrable representations, Ann. Funct. Anal. 7(2016), no.1, 9-16. 








\bibitem{3}
B. G.  Folland, A Course In Abstract Harmonic Analysis, CRC press, Inc, 1995.


%
%
%


\bibitem{kh8}
R. I. Jewett, Spaces With an Abstract convolution of measures, Advances in Math, 18 (1975). 

%
%
%
%
%
%
%
%

\bibitem{5}
T. S. Liu, Invariant measure on Double Coset Spaces, university of pennsylvania and university of Massachusetts, 26 March 1965.



\bibitem{khkh7}
L. H. Loomis, Positive definite functions and induced representations, Duke Math, J. 27(1960), 569-579. 

%



\bibitem{khkh6}
G. W. Mackey, Induced representations of locally compact groups I, Ann. of Math. 55 (1959), 101-139. 



\bibitem{khkh999}
H. Reiter, J. Stegeman, Classical Harmonic Analysis and Locally Compact Groups, Oxford University Press, 2000. 









































\end{thebibliography}

\end{document}